\begin{document}
\numberwithin{equation}{section}

\def\1#1{\overline{#1}}
\def\2#1{\widetilde{#1}}
\def\3#1{\widehat{#1}}
\def\4#1{\mathbb{#1}}
\def\5#1{\frak{#1}}
\def\6#1{{\mathcal{#1}}}

\newcommand{\de}{\partial}
\newcommand{\R}{\mathbb R}
\newcommand{\Ha}{\mathbb H}
\newcommand{\al}{\alpha}
\newcommand{\tr}{\widetilde{\rho}}
\newcommand{\tz}{\widetilde{\zeta}}
\newcommand{\tk}{\widetilde{C}}
\newcommand{\tv}{\widetilde{\varphi}}
\newcommand{\hv}{\hat{\varphi}}
\newcommand{\tu}{\tilde{u}}
\newcommand{\tF}{\tilde{F}}
\newcommand{\debar}{\overline{\de}}
\newcommand{\Z}{\mathbb Z}
\newcommand{\C}{\mathbb C}
\newcommand{\Po}{\mathbb P}
\newcommand{\zbar}{\overline{z}}
\newcommand{\G}{\mathcal{G}}
\newcommand{\So}{\mathcal{S}}
\newcommand{\Ko}{\mathcal{K}}
\newcommand{\U}{\mathcal{U}}
\newcommand{\B}{\mathbb B}
\newcommand{\oB}{\overline{\mathbb B}}
\newcommand{\Cur}{\mathcal D}
\newcommand{\Dis}{\mathcal Dis}
\newcommand{\Levi}{\mathcal L}
\newcommand{\SP}{\mathcal SP}
\newcommand{\Sp}{\mathcal Q}
\newcommand{\A}{\mathcal O^{k+\alpha}(\overline{\mathbb D},\C^n)}
\newcommand{\CA}{\mathcal C^{k+\alpha}(\de{\mathbb D},\C^n)}
\newcommand{\Ma}{\mathcal M}
\newcommand{\Ac}{\mathcal O^{k+\alpha}(\overline{\mathbb D},\C^{n}\times\C^{n-1})}
\newcommand{\Acc}{\mathcal O^{k-1+\alpha}(\overline{\mathbb D},\C)}
\newcommand{\Acr}{\mathcal O^{k+\alpha}(\overline{\mathbb D},\R^{n})}
\newcommand{\Co}{\mathcal C}
\newcommand{\Hol}{{\rm Hol}}
\newcommand{\Aut}{{\sf Aut}(\mathbb D)}
\newcommand{\D}{\mathbb D}
\newcommand{\oD}{\overline{\mathbb D}}
\newcommand{\oX}{\overline{X}}
\newcommand{\loc}{L^1_{\rm{loc}}}
\newcommand{\la}{\langle}
\newcommand{\ra}{\rangle}
\newcommand{\thh}{\tilde{h}}
\newcommand{\N}{\mathbb N}
\newcommand{\kd}{\kappa_D}
\newcommand{\Hr}{\mathbb H}
\newcommand{\ps}{{\sf Psh}}
\newcommand{\Hess}{{\sf Hess}}
\newcommand{\subh}{{\sf subh}}
\newcommand{\harm}{{\sf harm}}
\newcommand{\ph}{{\sf Ph}}
\newcommand{\tl}{\tilde{\lambda}}
\newcommand{\gdot}{\stackrel{\cdot}{g}}
\newcommand{\gddot}{\stackrel{\cdot\cdot}{g}}
\newcommand{\fdot}{\stackrel{\cdot}{f}}
\newcommand{\fddot}{\stackrel{\cdot\cdot}{f}}

\def\v{\varphi}
\def\Re{{\rm Re}\,}
\def\Im{{\rm Im}\,}
\def\ext{{\rm ext}\,}
\def\Lr{{\rm Lr}\,}
\def\tr{{\rm tr}\,}
\def\sp{{\rm sp}}
\def\rg{{\rm rg}\,}

\def\Label#1{\label{#1}}


\def\cn{{\C^n}}
\def\cnn{{\C^{n'}}}
\def\ocn{\2{\C^n}}
\def\ocnn{\2{\C^{n'}}}
\def\je{{\6J}}
\def\jep{{\6J}_{p,p'}}
\def\th{\tilde{h}}


\def\dist{{\rm dist}}
\def\const{{\rm const}}
\def\Span{{\rm Span\,}}
\def\rk{{\rm rank\,}}
\def\dim{{\rm dim}}
\def\id{{\rm id}}
\def\aut{{\rm aut}}
\def\hol{{\rm hol}}
\def\Aut{{\rm Aut}}
\def\CR{{\rm CR}}
\def\GL{{\sf GL}}
\def\U{{\sf U}}
\def\la{\langle}
\def\ra{\rangle}

\emergencystretch15pt \frenchspacing

\newtheorem{theorem}{Theorem}[section]
\newtheorem*{theorem**}{Theorem \mynumber}
\newenvironment{theorem*}[1]
  {\newcommand{\mynumber}{\ref{#1}}\begin{theorem**}}
  {\end{theorem**}}
\newtheorem{lemma}[theorem]{Lemma}
\newtheorem{proposition}[theorem]{Proposition}
\newtheorem{problem}[theorem]{Problem}
\newtheorem{corollary}[theorem]{Corollary}

\theoremstyle{definition}
\newtheorem{definition}[theorem]{Definition}
\newtheorem{example}[theorem]{Example}

\theoremstyle{remark}
\newtheorem{remark}[theorem]{Remark}
\numberwithin{equation}{section}

\title[Loewner equations on complete hyperbolic domains]{Loewner equations on complete hyperbolic domains}
\author[L. Arosio]{Leandro Arosio$^{\ddagger}$}
\address{Istituto Nazionale di Alta Matematica ``Francesco Severi'', Citt\`a Universitaria, Piazzale Aldo Moro 5, 00185 Rome, Italy}
\email{arosio@altamatematica.it}
\thanks{$^{\ddagger}$Titolare di una Borsa della Fondazione Roma - Terzo Settore  bandita dall'Istituto Nazionale di Alta Matematica}

\date{\today }

\keywords{Loewner chains in several variables; Loewner equations; Evolution families; Resonances}

\setcounter{tocdepth}{1}

\begin{abstract}
We prove that,  on a complete hyperbolic domain $D\subset \mathbb{C}^q$,  any Loewner PDE associated with a Herglotz vector field of the form $H(z,t)=\Lambda(z)+O(|z|^2)$, where   the eigenvalues of $\Lambda$ have strictly negative real part, admits a solution given by a family of univalent mappings $(f_t\colon D\to \mathbb{C}^q)$ which satisfies $\cup_{t\geq 0}f_t(D)=\mathbb{C}^q$.   If no real resonance occurs among the eigenvalues of $\Lambda$, then the family $(e^{\Lambda t}\circ f_t)$ is uniformly bounded in a neighborhood of the origin.  We also give a generalization of Pommerenke's univalence criterion on complete hyperbolic domains.
\end{abstract}
\maketitle
\tableofcontents
\section{Introduction}

We begin recalling the Loewner equations on the unit disc $\mathbb{D}\subset \mathbb{C}.$
The  Loewner PDE is the following:
\begin{equation}\label{L1}
\frac{\partial f_t(z)}{\partial t}=-\frac{\partial f_t(z)}{\partial z} H(z,t),\quad \hbox{a.e. } t\geq 0,z\in \mathbb{D},
\end{equation}
where $H(z,t)=zp(z,t)$ and $p(z,t)\colon \mathbb{D}\times \mathbb{R}^+\to \C$ is measurable in $t\geq 0$,   holomorphic in $z\in \mathbb{D}$ and satisfies $ \Re p(z,t)<0$ and $p(0,t)=-1$ for all $t\geq 0$.  The second equation is the Loewner ODE:
\begin{equation}
\begin{cases}
\frac{\de}{\de t}\v_{s,t}(z)= H(\v_{s,t}(z),t), \quad \hbox{a.e. } t\in [s,\infty), z\in \mathbb{D},\\
\v_{s,s}(z)=z,\quad s\geq 0,z\in \mathbb{D}.
\end{cases}
\end{equation}
Both equations were introduced by  Loewner in 1923 \cite{Loewner} and used to prove the case $n=3$ of the Bieberbach conjecture. Loewner theory was developed by Pommerenke \cite{Pommerenke} and  Kufarev \cite{Kufarev} as a powerful tool in geometric function theory. In fact it is  one of the main ingredients of the proof of the  Bieberbach conjecture given by de Branges \cite{deBranges}  in 1985.
Among the extensions of  the  theory we recall the celebrated theory of Schramm-Loewner evolution \cite{Schramm} introduced in 1999.

Loewner thoery  was extended to  several complex variables  by Duren, Graham, Hamada, G. Kohr, M. Kohr,  Pfaltzgraff  and others \cite{Duren-Graham-Hamada-Kohr}\cite{Graham-Hamada-Kohr-Kohr}\cite{Pfaltzgraff}.
Recently Bracci, Contreras and D\`iaz-Madrigal \cite{Bracci-Contreras-Diaz}\cite{Bracci-Contreras-Diaz-II} (see also \cite{Arosio-Bracci}) proposed a generalization of the  Loewner ODE which has its natural setting in  complete hyperbolic manifolds.
In the following we denote by $D$ a complete hyperbolic (in the sense of Kobayashi) domain of $\C^q$.
Recall that a holomorphic vector field $H\colon D\to \C^q$ is said an {\sl infinitesimal generator} provided the Cauchy problem
$$
\begin{cases}
 \overset{\bullet}{z}(t)=H(z(t)),\\
z(0)=z_0
\end{cases}
$$
has a solution $z:[0,+\infty)\to D$ for all $z_0\in D$.

 A {\sl Herglotz vector field} on a complete hyperbolic domain  $D\subset \C^q$ is  a non-autonomous holomorphic vector field $H(z,t)\colon D\times\R^+\to \C^q$ which is measurable in $t\geq 0$, which is an infinitesimal generator for a.e. $t\geq 0$ fixed, and such that
for any compact set $K\subset D$   there exists a  function $c_{K}\in L^d_{loc}(\R^+,\R^+)$, with $d\in [1,\infty]$, such that $$|H(z,t)|\leq c_{K}(t),\quad z\in K, t\geq 0.$$ These vector fields are the natural generalizations of the function $H(z,t)=-zp(z,t)$ in (\ref{L1}). The Loewner ODE studied in \cite{Arosio-Bracci}\cite{Bracci-Contreras-Diaz-II}
\begin{equation}
\label{ODE}
\begin{cases}
\frac{\de}{\de t}\v_{s,t}(z)= H(\v_{s,t}(z), t), \quad \hbox{a.e. } t\in[s,\infty),\ z\in D,\\
\v_{s,s}(z)=z,\quad s\geq 0,\ z\in D,
\end{cases}
\end{equation}
has a locally absolutely continuous (in the variable $t$) solution defined for all $0\leq s\leq t$ given by a family $(\v_{s,t}\colon D\to D)$ of univalent mappings which is a $\R^+$-evolution family, that is which satisfies $\v_{s,t}=\v_{u,t}\circ\v_{s,u}$ for all $0\leq s\leq u\leq t$ and $\v_{s,s}(z)=z$ for all $s\geq 0$.

Let $H(z,t)$ be a Hergloz vector field on $D$.
In \cite{Arosio-Bracci-Hamada-Kohr} we proved that a  family of univalent mappings  $(f_t\colon D\to \C^q)$ is locally absolutely continuous (in the variable $t$)  and solves the Loewner PDE 
\begin{equation}\label{PDEintro}
\frac{\partial f_t(z)} {\partial t }=-df_t(z)H(z,t),\quad \hbox{a.e. } t\geq 0,\  z\in D,
\end{equation} if and only if it solves the functional equation 
\begin{equation}\label{functional}
f_t\circ\v_{s,t}(z)=f_s(z),\quad 0\leq s\leq t,\ z\in D,
\end{equation}
where $(\v_{s,t})$ is the solution of (\ref{ODE}).

The solution $(f_t\colon D\to \C^q)$ satisfies $f_s(D)\subset f_t(D)$ for all $0\leq s\leq t$. A family of univalent mappings with this property is called  a $\mathbb{R}^+$-{\sl Loewner chain}.
  A $\mathbb{R}^+$-evolution family $(\v_{s,t})$ and a $\mathbb{R}^+$-Loewner chain $(f_t)$ are {\sl associated} if (\ref{functional}) holds.

We now  introduce the special Herglotz vector fields that we are going to study in this paper.
A Herglotz vector field $H(z,t)$ is of {\sl dilation type} if  $$H(z,t)=\Lambda(z)+O(|z|^2), \quad t\geq 0,$$ where the eigenvalues of $\Lambda\in \mathcal{L}(\mathbb{C}^q)$ have strictly negative real part, and the term $O(|z|^2)$ may depend on $t$.

We recall the following recent result by Graham, Hamada, G. Kohr and M. Kohr \cite{Graham-Hamada-Kohr-Kohr}.
\begin{theorem}[\cite{Graham-Hamada-Kohr-Kohr}]\label{kohrresult}
Let $H(z,t)=\Lambda(z)+O(|z|^2)$ be a dilation Herglotz vector field on the unit ball $\mathbb{B}\subset \C^q$, and assume that
\begin{equation}\label{condition}
2\max\{\Re\langle \Lambda(z),z\rangle:|z|=1\}<\min \{ \Re\lambda :\lambda \in \sp(\Lambda)\}.
\end{equation}
Then the Loewner PDE (\ref{PDEintro}) admits a locally absolutely continuous  univalent solution $(f_t\colon \mathbb{B}\to \C^q)$ such that  $\cup_{t\geq 0}f_t(\mathbb{B})=\mathbb{C}^q$. The family $(e^{\Lambda t}\circ f_t)$ is uniformly bounded in a neighborhood of the origin.
\end{theorem}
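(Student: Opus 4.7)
The plan is to start from the solution $(\v_{s,t})$ of the Loewner ODE (\ref{ODE}) associated with $H$ and to construct the candidate Loewner chain as the reversed, renormalized limit
\begin{equation*}
f_{s}(z):=\lim_{t\to\infty}e^{-t\Lambda}\v_{s,t}(z),
\end{equation*}
first on a fixed neighborhood $U$ of the origin and then extending it to all of $\B$ via the functional equation. Once the limit exists locally uniformly near $0$, identity (\ref{functional}) is automatic from the evolution property $\v_{s,t}\circ\v_{r,s}=\v_{r,t}$, so by the equivalence with (\ref{functional}) recalled above, $(f_{t})$ solves the Loewner PDE (\ref{PDEintro}).

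The first step is an exponential decay estimate for $\v_{s,t}$ near the origin. Writing $H(z,t)=\Lambda z+R(z,t)$ with $R(z,t)=O(|z|^{2})$ uniformly in $t$, differentiation of $|\v_{s,t}(z)|^{2}$ along the ODE yields
\begin{equation*}
\frac{d}{dt}|\v_{s,t}(z)|^{2}=2\Re\la\Lambda\v_{s,t}(z),\v_{s,t}(z)\ra+O(|\v_{s,t}(z)|^{3}).
\end{equation*}
Setting $\alpha:=\max\{\Re\la\Lambda w,w\ra:|w|=1\}$, which is strictly negative since $\sp(\Lambda)\subset\{\Re\lambda<0\}$, a Gronwall argument on a sufficiently small ball $U\subset\B$ gives $|\v_{s,t}(z)|\le Ce^{\alpha(t-s)}|z|$ for all $z\in U$. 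In particular $\v_{s,t}(U)\subset U$ and $\v_{s,t}(z)\to0$ as $t\to\infty$.

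The core step is the convergence of $e^{-t\Lambda}\v_{s,t}(z)$ as $t\to\infty$. Since $\Lambda$ commutes with $e^{-u\Lambda}$, the ODE rewrites as
\begin{equation*}
\frac{d}{du}\bigl(e^{-u\Lambda}\v_{s,u}(z)\bigr)=e^{-u\Lambda}R(\v_{s,u}(z),u),
\end{equation*}
so for $s\le t_{1}\le t_{2}$
\begin{equation*}
\bigl|e^{-t_{2}\Lambda}\v_{s,t_{2}}(z)-e^{-t_{1}\Lambda}\v_{s,t_{1}}(z)\bigr|\le C\int_{t_{1}}^{t_{2}}\|e^{-u\Lambda}\|\cdot|\v_{s,u}(z)|^{2}\,du.
\end{equation*}
For every $\varepsilon>0$ one has $\|e^{-u\Lambda}\|\le C_{\varepsilon}e^{(-\min\Re\sp(\Lambda)+\varepsilon)u}$; combined with the decay from Step 1 the integrand is bounded by $C|z|^{2}e^{(2\alpha-\min\Re\sp(\Lambda)+\varepsilon)u}$. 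Hypothesis (\ref{condition}) is precisely $2\alpha<\min\Re\sp(\Lambda)$, so for small $\varepsilon$ the exponent is strictly negative, the tail is Cauchy, and $f_{s}$ is holomorphic on $U$ with $f_{s}(0)=0$, $df_{s}(0)=I$ and $(e^{\Lambda t}\circ f_{t})$ uniformly bounded on $U$; in particular $f_{s}$ is univalent on $U$.

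For the extension step, since $\B$ is complete hyperbolic and the origin is an attracting fixed point of $(\v_{s,t})$, for each $z\in\B$ there exists $T=T(s,z)\ge s$ with $\v_{s,T}(z)\in U$, and one sets $f_{s}(z):=f_{T}\circ\v_{s,T}(z)$, independent of $T$ by the already verified (\ref{functional}) on $U$. This produces a holomorphic $f_{s}\colon\B\to\C^{q}$, univalent because both $\v_{s,T}$ and $f_{T}|_{U}$ are. The exhaustion $\cup_{t\ge0}f_{t}(\B)=\C^{q}$ follows because for a fixed $\rho>0$ the image $f_{t}(\rho\B)$ is comparable to $e^{-t\Lambda}(\rho\B)$, and $e^{-t\Lambda}$ has eigenvalues with strictly positive real part, so $\cup_{t}e^{-t\Lambda}(\rho\B)=\C^{q}$. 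The delicate point is the third step: one must shrink $U$ so that $|R(w,u)|\le C|w|^{2}$ holds uniformly in $u$, and then the quadratic decay of $\v_{s,u}(z)$ has to beat the exponential growth of $\|e^{-u\Lambda}\|$; hypothesis (\ref{condition}) is exactly what makes this balance work, and without it the renormalization $e^{-t\Lambda}\v_{s,t}$ would not stabilize.
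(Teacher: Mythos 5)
The paper does not prove Theorem~\ref{kohrresult}: it is quoted verbatim from \cite{Graham-Hamada-Kohr-Kohr}, and the only existence proof the paper itself contains is the one for the more general Theorem~\ref{PDEsolution}. Your direct renormalization $f_s=\lim_{t\to\infty}e^{-t\Lambda}\circ\v_{s,t}$ is the classical route and is essentially what the cited reference does, so there is no internal proof to match it against; what is worth comparing is your method with the paper's proof of Theorem~\ref{PDEsolution}. There, hypothesis~(\ref{condition}) is dropped, and precisely then the renormalized orbit $e^{-t\Lambda}\circ\v_{s,t}$ need not converge: real resonances can make $e^{\Lambda t}\circ f_t$ unbounded (see the counterexample recalled in the Introduction). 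The paper avoids this by replacing the linear normalizer $e^{\Lambda t}$ by a triangular polynomial normalizer $(T_{n,m})$ built via a non-autonomous Poincar\'e--Dulac scheme (Propositions~\ref{core} and~\ref{discreteresult}), conjugating $(\v_{n,m})$ to $(T_{n,m})$ near $0$ by tangent-to-identity maps $h_n$ solving the homological difference equation~(\ref{ricorrenza}), and then setting $f_n=T_{0,n}^{-1}\circ h_n$. Your hypothesis~(\ref{condition}) is exactly what makes a \emph{linear} normalizer already suffice: since $\Re\lambda_j\le\alpha$ for every eigenvalue, $\sum_j i_j\,\Re\lambda_j\le 2\alpha<\min\Re\sp(\Lambda)\le\Re\lambda_l$ whenever $\sum i_j\ge2$, so no real resonance can occur and $\mathcal R_i=0$ at every order. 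Thus your approach buys a shorter proof under the stronger spectral gap hypothesis, while the paper's buys existence (and domain-exhaustion) without~(\ref{condition}) and on arbitrary complete hyperbolic domains, at the cost of the polynomial normal form machinery.

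One point in your write-up needs tightening. You assume at the outset that $R(z,t)=O(|z|^2)$ \emph{uniformly in} $t$. The definition of a dilation Herglotz vector field only gives $|H(z,t)|\le c_K(t)$ with $c_K\in L^d_{\rm loc}$, which controls nothing at $t\to\infty$. If one only had that bound, the tail $\int\|e^{-u\Lambda}\|\,|R(\v_{s,u}(z),u)|\,du$ would not be visibly summable, since $\|e^{-u\Lambda}\|$ grows exponentially. The uniformity is true on $\B$, but for a non-trivial reason: the family of infinitesimal generators $G$ on $\B$ with $G(0)=0$ and $dG(0)=\Lambda$ fixed is a compact subset of $\Hol(\B,\C^q)$, hence uniformly bounded on each $r\oB$, and Cauchy estimates on the second and higher order terms then yield $|R(z,t)|\le C|z|^2$ for $|z|\le r'$ with $C$ independent of $t$. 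You should state and invoke this explicitly, since it is the hinge of the whole convergence estimate and is not a formal consequence of the definitions.
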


In \cite{Arosio} we introduced  $\mathbb{N}$-evolution families and $\mathbb{N}$-Loewner chains, that is the discrete-time analogues  of  $\mathbb{R}^+$-evolution families and  $\mathbb{R}^+$-Loewner chains. Solving equation (\ref{functional}) for discrete times we proved the following result.
\begin{theorem}[\cite{Arosio}]\label{continuousresultintro}
Let $H(z,t)=\Lambda(z)+O(|z|^2)$ be a dilation Herglotz vector field on the unit ball $\mathbb{B}\subset \C^q$. Assume that $\Lambda$ is diagonal. Then the Loewner PDE (\ref{PDEintro}) admits a locally absolutely continuous univalent solution $(f_t\colon \mathbb{B}\to \C^q)$ such that  $\cup_{t\geq 0}f_t(\mathbb{B})=\mathbb{C}^q$. The family $(e^{\Lambda t}\circ f_t)$ is uniformly bounded in a neighborhood of the origin if no real resonance of the form
 $$\Re(\sum_{j=1}^q k_j\alpha_j)=\Re\alpha_l,\quad k_j\geq 0,\ \sum_{j=1}^q k_j\geq 2$$ occurs among the eigenvalues $(\alpha_j)$ of $\Lambda$. 
\end{theorem}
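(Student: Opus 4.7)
The plan is to solve the functional equation (\ref{functional}) first at integer times and then interpolate along the flow. By the equivalence of (\ref{functional}) and (\ref{PDEintro}) established in \cite{Arosio-Bracci-Hamada-Kohr}, it suffices to produce an $\mathbb{N}$-Loewner chain $(f_n)_{n\in\mathbb{N}}$ of univalent maps $\mathbb{B}\to\C^q$ with $f_n\circ\v_{m,n}=f_m$ for integers $m\le n$ and $\cup_n f_n(\mathbb{B})=\C^q$. Once this is done, one extends by $f_t:=f_{\lceil t\rceil}\circ\v_{t,\lceil t\rceil}$; the cocycle identity for $(\v_{s,t})$ then yields the real-time functional equation, while univalence, local absolute continuity in $t$, and the exhaustion $\cup_{t\ge 0}f_t(\mathbb{B})=\C^q$ are inherited from the discrete chain and the regularity of $(\v_{s,t})$.

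For the discrete construction I would exploit the linearization $d\v_{n,k}(0)=e^{(k-n)\Lambda}$. Setting $g_{n,k}(z):=e^{-k\Lambda}\v_{n,k}(z)$, one has $dg_{n,k}(0)=e^{-n\Lambda}$ independently of $k$ and $g_{n,k}\circ\v_{m,n}=g_{m,k}$, so the candidate is $f_n:=\lim_{k\to\infty}g_{n,k}$, which would satisfy the discrete functional equation in the limit. Expanding $\v_{k,k+1}(w)=e^{\Lambda}w+R_k(w)$ with $R_k(w)=O(|w|^2)$ uniformly in $k$ (using the Herglotz bound on compacts), one obtains
\[
g_{n,k+1}(z)-g_{n,k}(z)=e^{-(k+1)\Lambda}R_k(\v_{n,k}(z)),
\]
and since $\Lambda$ is diagonal with eigenvalues $\lambda_j$ of strictly negative real part, each component is controlled on compacta by a factor of the form $e^{-\Re\lambda_\ell(k+1)+2\Re\lambda_{\max}(k-n)}$, which is summable outside a finite set of resonant indices; one would therefore first absorb the resonant monomials via a Poincar\'e--Dulac polynomial correction and then carry out Cauchy estimates on the renormalized sequence. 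Univalence of $f_n$ follows by Hurwitz, while the exhaustion $\cup_n f_n(\mathbb{B})=\C^q$ is equivalent to identifying the Kobayashi-abstract direct limit of $(\mathbb{B},\v_{m,n})$ with $\C^q$, which holds by a non-autonomous Rosay--Rudin-type basin theorem since every orbit contracts to $0$ at the linear rate $e^\Lambda$.

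The main obstacle is the uniform bound on $(e^{\Lambda t}\circ f_t)$ near the origin under the non-resonance hypothesis. I would derive it from the Taylor expansion $f_t(z)=e^{-\Lambda t}z+\sum_{|\alpha|\ge 2}a_\alpha(t)z^\alpha$ with $a_\alpha(t)\in\C^q$: substituting into (\ref{PDEintro}) produces a linear inhomogeneous ODE for each component $(a_\alpha)_\ell(t)$ with rate $-\langle\alpha,\lambda\rangle$ and bounded inhomogeneity (by induction on $|\alpha|$). The normalized object $e^{\lambda_\ell t}(a_\alpha)_\ell(t)$ then satisfies an ODE with rate $\mu:=\lambda_\ell-\langle\alpha,\lambda\rangle$, and precisely the non-resonance assumption $\Re\mu\ne 0$ for every $|\alpha|\ge 2$ and every $\ell$ allows one to choose the initial data (when $\Re\mu>0$) or take any initial data (when $\Re\mu<0$) so that $|e^{\lambda_\ell t}(a_\alpha)_\ell(t)|$ stays bounded uniformly in $t$. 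Cauchy estimates on these coefficients then yield a common polydisc on which $e^{\Lambda t}\circ f_t$ is uniformly bounded for all $t\ge 0$, completing the proof.
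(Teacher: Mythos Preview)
Your global strategy---discretize, build the $\mathbb{N}$-Loewner chain as a limit after Poincar\'e--Dulac corrections, then set $f_t:=f_{\lceil t\rceil}\circ\varphi_{t,\lceil t\rceil}$---is exactly the paper's (Propositions \ref{core}, \ref{discreteresult}, \ref{oddio} and Lemma \ref{discretetocontinuous}). The principal gap is in your treatment of the uniform bound on $(e^{\Lambda t}\circ f_t)$. You derive a linear ODE for each normalized coefficient $e^{\lambda_\ell t}(a_\alpha)_\ell(t)$ and then \emph{choose} the initial datum when $\Re\mu>0$ so that the solution stays bounded. But $f_0$ was already fixed by the construction in your second paragraph, so $a_\alpha(0)$ is not yours to choose; what you describe is a second, independent construction (close to Voda's \cite{Voda}) with no link to the solution satisfying $\cup_t f_t(\mathbb{B})=\C^q$. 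The paper avoids this entirely: in the non-resonant case the non-autonomous Poincar\'e--Dulac procedure conjugates $(\varphi_{n,m})$ locally to its linear part $(e^{\Lambda(m-n)})$ via uniformly bounded tangent-to-identity maps $h_n$, and then $e^{\Lambda n}\circ f_n=h_n$ is bounded \emph{by construction}. The bound is not a separate step; it falls out of the convergence proof once that is carried out correctly.

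There is also a gap in the univalence step. Once the polynomial corrections $k_m$ (which are not globally injective) are inserted, the approximating sequence is no longer univalent and Hurwitz no longer applies. The paper argues dynamically (Proposition \ref{interuniv}): if $f_n(z)=f_n(w)$ then $h_m(\varphi_{n,m}(z))=h_m(\varphi_{n,m}(w))$ for all $m\ge n$, and for large $m$ both images lie in a fixed ball on which every $h_m$ is univalent (Lemma \ref{koebe}), contradicting the injectivity of $\varphi_{n,m}$. Finally, your phrase ``absorb the resonant monomials'' skips the actual content of the method: one must solve the homological \emph{difference} equation $H_{n+1}=e^{\Lambda}\circ H_n\circ e^{-\Lambda}+B_n$ in each $\mathcal{H}_i$ with $(H_n)$ bounded, which forces the spectral splitting of $\Gamma:H\mapsto e^{\Lambda}\circ H\circ e^{-\Lambda}$ into stable and unstable parts (Lemma \ref{figo}) together with a specific choice of $H_0$ on the unstable side; and for the existence statement in the resonant case the resonant monomials must be carried into a triangular family $(T_{n,m})$ replacing $(e^{\Lambda(m-n)})$, which your sketch does not mention.
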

The same result was obtained independently  with  different methods by Voda \cite{Voda}, assuming $\max\{\Re\langle \Lambda(z),z\rangle:|z|=1\}<0$  instead of  assuming $\Lambda$ diagonal.

Notice that condition (\ref{condition})  avoids real resonances. 
Recall that by \cite[Counterexemple 2]{Arosio} the Loewner PDE associated with the autonomous dilation Herglotz vector field on $\B\subset \C^2$ $$H(z,t)=(\alpha z_1,2\alpha z_2 +cz_1^2),$$ where $|\alpha|<1/2$ and $c\in \C^*$ is small enough, does not admit any solution $(f_t\colon \mathbb{B}\to \C^q)$ such that  the family $(e^{\Lambda t}\circ f_t)$ is uniformly bounded in a neighborhood of the origin. In this case a real resonance occurs.

In this paper we generalize Theorem \ref{continuousresultintro} to any  dilation  Herglotz vector field on a complete hyperbolic domain  $D\subset\mathbb{C}^q$. We should mention that, to our knowledge, this is the first existence result for the Loewner PDE (\ref{PDEintro}) on such  domains. 
 We start by solving equation (\ref{functional})  for discrete times. Let $(\v_{n,m})$ be a $\mathbb{N}$-evolution family. We show that a family  of tangent to identity univalent  mappings $(h_n\colon D\to \C^q)$ 
which is uniformly bounded near the origin  solves the {\sl non-autonomous Schr\"oder equation}
\begin{equation}\label{lastbutnotleast}
h_m\circ \v_{n,m}=e^{\Lambda (m-n)}\circ h_n.
\end{equation}
 if and only if $(\v_{n,m})$ is associated with the  $\mathbb{N}$-Loewner chain $(f_n)\doteq(e^{-\Lambda n}\circ h_n).$ 

Equation  (\ref{lastbutnotleast})   shows a strong connection between Loewner theory and the theory of basins of attraction of discrete non-autonomous complex dynamical systems grown around  Bedford's conjecture: see \cite{Abate-Abbondandolo-Majer}\cite{Fornaess-Stensones}\cite{Jonsson-Varolin}\cite{Peters}\cite{Wold}\cite{Guitta}.
To solve equation (\ref{lastbutnotleast}) we use techniques from this theory, in particular from \cite{Peters}. Indeed we need a non-autonomous version of the Poincar\'e-Dulac method, whose homological equation is replaced  by a difference equation in the space $\mathcal{H}_i$ of homogeneus polynomial mappings of degree $i$,
\begin{equation}\label{ricorrenza}
H_{n+1}=e^{\Lambda}\circ H_n\circ e^{-\Lambda}+B_n,
\end{equation} 
where $(H_n)$ is an unknown bounded sequence in $\mathcal{H}_i$ and $(B_n)$ is a bounded sequence in $\mathcal{H}_i$. In order to find a bounded solution of (\ref{ricorrenza}) we study the spectral and dynamical properties of the linear operator  $H\mapsto e^{\Lambda}\circ H\circ e^{-\Lambda}$ acting on $\mathcal{H}_i$ and we show that the  obstruction to the existence of solutions  is given by  real resonances.

This method provides a family of univalent mappings $(f_n\colon r\mathbb{B}\subset D\to \C^q)_{n\in \mathbb{N}}$ satisfying (\ref{functional}) but defined only for integer times and in a little neighborhood of the origin. Then we extend this family to all $t\in\mathbb{R}^+$ and $z\in D$.

The main result of this paper is thus the following.
\begin{theorem}\label{convexresultintro}
Let $D\subset \mathbb{C}^q$ be a complete hyperbolic domain and let $H(z,t)=\Lambda(z)+O(|z|^2)$ be a dilation Herglotz vector field on D. Then the Loewner PDE (\ref{PDEintro}) admits a locally absolutely continuous univalent solution $(f_t\colon \mathbb{B}\to \C^q)$ such that  $\cup_{t\geq 0}f_t(D)=\mathbb{C}^q$.  The family $(e^{\Lambda t}\circ f_t)$ is uniformly bounded in a neighborhood of the origin if no real resonance occurs among the eigenvalues of $\Lambda$.

\end{theorem}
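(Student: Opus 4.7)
My strategy is to follow the two-step program sketched in the introduction: construct a discrete-time chain on a small ball near $0$, then extend it to all of $D$ and to all of $\R^+$. By \cite{Arosio-Bracci-Hamada-Kohr} the PDE (\ref{PDEintro}) and the functional equation (\ref{functional}) are equivalent for locally absolutely continuous univalent families, so it suffices to produce a family of univalent mappings $(f_t\colon D\to\C^q)$ satisfying $f_t\circ\v_{s,t}=f_s$, where $(\v_{s,t})$ is the evolution family of $H$ given by (\ref{ODE}).

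\textbf{Step 1: discrete-time Schr\"oder construction.} By the criterion stated after (\ref{lastbutnotleast}), on the $\mathbb{N}$-evolution family $(\v_{n,m})$ it is enough to find univalent, tangent-to-identity mappings $(h_n)$ solving the non-autonomous Schr\"oder equation; then $f_n\doteq e^{-\Lambda n}\circ h_n$ will be an $\mathbb{N}$-Loewner chain. I seek $h_n=\id+\sum_{i\geq 2}H_n^{(i)}$ with $H_n^{(i)}\in\mathcal{H}_i$ and, matching degrees, reduce inductively on $i$ to the difference equation (\ref{ricorrenza})
$$
H_{n+1}^{(i)}=e^{\Lambda}\circ H_n^{(i)}\circ e^{-\Lambda}+B_n^{(i)},
$$
in which the forcing $B_n^{(i)}$ is determined by $\v_{n,n+1}$ and by the previously chosen $H_n^{(j)}$ with $j<i$. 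The operator $T_\Lambda\colon H\mapsto e^\Lambda\circ H\circ e^{-\Lambda}$ on $\mathcal{H}_i$ has eigenvalues $\exp(\sum_j k_j\alpha_j-\alpha_l)$, so $\Re\sp(\Lambda)<0$ pushes all of them off the unit circle except when a real resonance occurs. Splitting $\mathcal{H}_i$ into $T_\Lambda$-invariant subspaces according to $|\mu|<1$, $|\mu|=1$, $|\mu|>1$, I solve the recurrence by a Neumann series running forward on the unstable block and backward on the stable one; in the non-resonant case the central block is absent and one obtains a bounded $(H_n^{(i)})$, while in the resonant case it is handled separately, allowing at worst polynomial growth in $n$. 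The main obstacle will be the uniform-in-$n$, uniform-in-$i$ Cauchy-type control of $\|H_n^{(i)}\|$ needed to sum the series on some ball $r\B\subset D$ and to read off univalence of the limit $(h_n)$. I expect this to go through along the lines of the normal-form constructions of Peters \cite{Peters}, using the exponential contraction $\v_{n,m}(z)\to 0$ near the origin coming from $\Re\sp(\Lambda)<0$ together with the uniform $L^d_{\rm loc}$ bounds in the definition of a Herglotz vector field.

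\textbf{Step 2: extension to $D$ and to continuous time.} Given $(f_n)$ on $r\B$, for $z\in D$ I pick $m\geq n$ large enough that $\v_{n,m}(z)\in r\B$ and set $\tilde f_n(z)\doteq f_m(\v_{n,m}(z))$; such $m$ exists for every $z\in D$ because $\Re\sp(\Lambda)<0$ forces $\v_{n,m}(z)\to 0$ (the orbit is relatively compact in $D$ by the normal family theorem on a complete hyperbolic domain and any limit point is a fixed point of iterated compositions, hence $0$). The semigroup property of $(\v_{n,m})$ makes this independent of $m$, univalence is inherited from $f_m$ and $\v_{n,m}$, and (\ref{functional}) at integer times extends to all of $D$. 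I then pass to real times by $f_t\doteq\tilde f_{\lceil t\rceil}\circ\v_{t,\lceil t\rceil}$; the semigroup identities for $(\v_{s,t})$ give $f_t\circ\v_{s,t}=f_s$ for all $0\leq s\leq t$, and absolute continuity in $t$ follows from that of $(\v_{s,t})$. The exhaustion $\bigcup_{t\geq 0}f_t(D)=\C^q$ comes from $f_n=e^{-\Lambda n}\circ h_n$: the image $h_n(r\B)$ contains a fixed neighborhood of $0$ (uniformly in $n$ in the non-resonant case, and otherwise growing slower than any exponential), against which the exponential dilation $e^{-\Lambda n}$ beats everything, so every $w\in\C^q$ lies in $f_n(r\B)$ for all sufficiently large $n$. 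Finally, uniform boundedness of $(e^{\Lambda t}\circ f_t)$ near $0$ in the resonance-free case is immediate from the uniform boundedness of $(h_n)=(e^{\Lambda n}\circ f_n)$ and the continuity in $t$ of $\v_{t,\lceil t\rceil}$ on a small ball.
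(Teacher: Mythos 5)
Your Step 2 and the overall architecture (discretize, conjugate near the origin, extend by the contraction to all of $D$, then reinsert continuous time via $f_t = f_{\lceil t\rceil}\circ\v_{t,\lceil t\rceil}$) track the paper closely. But Step 1 takes a genuinely different route, and it has a gap that the paper's route is specifically designed to avoid.

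You aim to solve the non-autonomous Schr\"oder equation $h_m\circ\v_{n,m}=e^{\Lambda(m-n)}\circ h_n$ itself, i.e.\ to conjugate $(\v_{n,m})$ to its \emph{linear} part, by summing a formal power series $h_n=\id+\sum_{i\geq 2}H_n^{(i)}$. Two problems. First, in the resonant case that equation simply has no uniformly bounded solution $(h_n)$ --- the paper's introduction explicitly recalls \cite[Counterexample 2]{Arosio} showing this on $\B\subset\C^2$ --- yet the criterion stated after (\ref{lastbutnotleast}) that you invoke to pass from $(h_n)$ to a Loewner chain, as well as the Koebe-type radius and the univalence argument for the limit, all require uniform boundedness of $(h_n)$. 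Your remark that the central (resonant) block is ``handled separately, allowing at worst polynomial growth in $n$'' concedes unboundedness and thereby voids every downstream step. The paper's key move, which you miss, is to \emph{change the target of the conjugacy}: it absorbs each resonant component into a polynomial triangular $\mathbb{N}$-evolution family $(T_{n,m})$ of bounded degree and bounded coefficients, conjugating $(\v_{n,m})$ to $(T_{n,m})$ rather than to $(e^{\Lambda(m-n)})$ (Propositions \ref{core}--\ref{discreteresult}); the resulting $(h_n)$ is uniformly bounded, the chain is $(T_{0,n}^{-1}\circ h_n)$, and it is simply not locally bounded in the resonant case --- exactly what the statement claims. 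Second, you acknowledge but do not resolve the uniform-in-$i$ control needed to sum $\sum_i H_n^{(i)}$. The paper sidesteps this entirely: it normalizes only up to a \emph{finite} degree $\ell$ chosen so that $\alpha^\ell<1/\beta$ (with $\beta$ the Lipschitz constant of $(T_{0,k}^{-1})$ from Lemma \ref{peters}), defines $h_n$ as $\lim_{m\to\infty}T_{n,m}^{-1}\circ k_m\circ\v_{n,m}$, and estimates the geometric-telescoping error. This is both easier and the only way to make the resonant case work.

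Two smaller points. Your argument in Step 2 that $\v_{n,m}(z)\to 0$ for all $z\in D$ --- ``any limit point is a fixed point of iterated compositions, hence $0$'' --- does not make sense for a non-autonomous family; the paper proves this (Lemma \ref{nuovo}) by an open-and-closed argument using Kobayashi balls and the contraction of Lemma \ref{estimate}. And your exhaustion argument $\bigcup_t f_t(D)=\C^q$ again silently assumes the non-resonant form $f_n=e^{-\Lambda n}\circ h_n$; in general the paper uses part (b) of Lemma \ref{peters}, namely $\bigcup_n T_{0,n}^{-1}(V)=\C^q$ for any neighborhood $V$ of $0$, which works uniformly across both cases.
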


We also generalize to complete hyperbolic domains the  classical univalence criterion in the unit disk due to Pommerenke \cite[Folgerung 6]{Pommerenke}. 

\begin{theorem}\label{univalencecriterionintro}
Let $D\subset \mathbb{C}^q$ be a complete hyperbolic domain and let $H(z,t)=\Lambda(z)+O(|z|^2)$ be a dilation Herglotz vector field on D.  
Let  $(f_t\colon D\to \C^q)$ be a family of holomorphic mappings which solves the Loewner PDE (\ref{PDEintro}) and assume that the family $(e^{\Lambda t}\circ f_t)$ is uniformly bounded in a neighborhood of the origin is an univalent family. Then for all $t\geq 0$ the mapping $f_t$ is univalent.
\end{theorem}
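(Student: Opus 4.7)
The plan is to propagate the univalence of each $f_t$ on a fixed neighborhood of the origin to the whole of $D$ by means of the functional equation satisfied by any solution of the Loewner PDE, exploiting the fact that every orbit of the evolution family contracts to $0$.

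First, without using univalence, I observe that every locally absolutely continuous solution $(f_t)$ of \eqref{PDEintro} automatically satisfies the functional equation \eqref{functional}: differentiating $t\mapsto f_t(\v_{s,t}(z))$ and combining \eqref{PDEintro} with \eqref{ODE} makes the derivative vanish almost everywhere, while the initial condition $\v_{s,s}=\id$ forces the constant value to be $f_s(z)$. Thus $f_t\circ \v_{s,t}=f_s$ for all $0\leq s\leq t$; this is the ``easy'' direction of the equivalence in \cite{Arosio-Bracci-Hamada-Kohr}, and it does not require univalence. Secondly, the hypothesis that $(e^{\Lambda t}\circ f_t)$ is uniformly bounded and univalent on a neighborhood of the origin, together with the fact that each $e^{\Lambda t}$ is a linear automorphism of $\C^q$, yields an open neighborhood $U$ of $0\in D$ on which every $f_t$ is univalent.

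The decisive dynamical ingredient is that $\v_{s,t}(z)\to 0$ as $t\to\infty$ for every $z\in D$. Since $H(0,t)=0$ the origin is a common fixed point of $(\v_{s,t})$, and its linearization $d\v_{s,t}(0)=e^{\Lambda(t-s)}$ has spectral radius going to $0$ because the eigenvalues of $\Lambda$ have strictly negative real part. On the complete hyperbolic domain $D$, a Denjoy--Wolff-type argument for continuous-time Herglotz evolution families (in the spirit of \cite{Arosio-Bracci}, \cite{Bracci-Contreras-Diaz}) upgrades this local attraction at $0$ to a global convergence of every orbit to $0$.

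Once this is available, the conclusion is immediate. Fix $s\geq 0$ and distinct points $z_1,z_2\in D$, and choose $t\geq s$ so large that both $\v_{s,t}(z_1)$ and $\v_{s,t}(z_2)$ lie in $U$. Since $\v_{s,t}$ is univalent on $D$ their images are distinct, and since $f_t$ is univalent on $U$ we obtain $f_t(\v_{s,t}(z_1))\neq f_t(\v_{s,t}(z_2))$; the functional equation then gives $f_s(z_1)\neq f_s(z_2)$, so $f_s$ is univalent. The main obstacle is the global convergence step: while the local attraction at $0$ is a direct consequence of the spectral hypothesis on $\Lambda$, extending it to every point of the complete hyperbolic domain $D$ is where the Kobayashi geometry of $D$ and the non-autonomous nature of $H$ enter in an essential way, and this constitutes the heart of the argument.
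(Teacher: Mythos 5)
Your proof is structurally the same as the paper's: you reduce to the functional equation $f_t\circ\varphi_{s,t}=f_s$, produce a neighborhood $U$ of $0$ on which every $f_t$ is injective, show every orbit $\varphi_{s,t}(z)$ eventually lies in $U$, and then push injectivity of $f_t$ on $U$ through the univalent map $\varphi_{s,t}$ to obtain injectivity of $f_s$ on all of $D$. This is exactly the propagation argument of Proposition~\ref{interuniv}, which the paper applies (inside the proof of Proposition~\ref{univalencecriterion}) to derive the present theorem.

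The genuine gap is the step you yourself single out: the global convergence $\varphi_{s,t}(z)\to 0$ for every $z\in D$. The hint you give---``a Denjoy--Wolff-type argument''---points away from the actual mechanism: Denjoy--Wolff results govern iterates of a \emph{single} self-map, while here one must handle a non-autonomous evolution family whose one-step maps vary. The paper proves this as Lemma~\ref{nuovo} by a basin-of-attraction/connectedness argument: Lemma~\ref{estimate} furnishes a Kobayashi ball $\Omega(0,\varepsilon)$ contained in every $\mathfrak{A}(m)$, the distance-decreasing property of holomorphic maps then shows that each basin $\mathfrak{A}(n)$ is both open and closed in $D$, and connectedness of $D$ forces $\mathfrak{A}(n)=D$. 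Two smaller points. First, the phrase ``is an univalent family'' in the statement is a typo (compare the Corollary closing Section~\ref{univalencecriterionsection}); you should not \emph{assume} injectivity of $f_t$ near $0$ but derive it from the uniform bound on $(e^{\Lambda t}\circ f_t)$ together with the normalization $e^{\Lambda t}\circ f_t(z)=z+O(|z|^2)$ via the Koebe-type Lemma~\ref{koebe}. Second, your use of the Loewner ODE to obtain univalence of $\varphi_{s,t}$ ``for free'' is valid under the stated hypotheses, but note that Proposition~\ref{univalencecriterion} deduces univalence of the transition maps from mere continuity of the subordination chain with no PDE assumed, so the paper is in fact proving a strictly more general criterion of which the theorem is a specialization.
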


\section{Local conjugacy}\label{locconj}

We start recalling some basic definitions.

\begin{definition} Let $\mathbb{T}$ be $\mathbb{N}$ or $\mathbb{R}^+$.
Let $D$ be a domain of $\C^q$. A {\sl $\mathbb{T}$-evolution family} is a family of univalent mappings $(\v_{\alpha,\beta}\colon D\to D)_{\alpha\leq \beta\in \mathbb{T}}$  such that 
\begin{itemize}
\item[i)]$ \v_{\al,\al}=\id$ for all $\al\geq 0$,
\item[ii)]  $\v_{\al,\beta}=\v_{\gamma,\beta}\circ\v_{\al,\gamma}$ for all $0\leq \al\leq \gamma\leq \beta.$
\end{itemize}

A family  of univalent mappings $(f_\al\colon D\to \C^q)$ is a {\sl $\mathbb{T}$-Loewner chain} if $f_\al(D)\subset f_\beta(D)$  for all $0\leq\al\leq \beta$. 
\end{definition}
\begin{remark}
Let  $(f_\al\colon D\to \C^q)$ be a $\mathbb{T}$-Loewner chain. Then there exists a unique associated  $\mathbb{T}$-evolution family $(\v_{\al,\beta}\doteq f^{-1}_\beta\circ f_\al)$.
\end{remark}

One has the following uniqueness result for $\mathbb{T}$-Loewner chains.
\begin{theorem}[\cite{Arosio-Bracci-Hamada-Kohr}]\label{thuniqueness}
Let $(\v_{\al,\beta})$ be a $\mathbb{T}$-evolution family on $D$ and let $(f_\al\colon D\to \C^q)$ be an associated  $\mathbb{T}$-Loewner chain
 If $(g_\al\colon D\to \C^q)$ is a subordination chain associated with  $(\v_{\al,\beta})$  then there exists a holomorphic mapping $\Psi\colon \cup_{\al\in \mathbb{T}}f_\al(D) \to \C^q$ such that $(g_\al=\Psi\circ f_\al).$ 
\end{theorem}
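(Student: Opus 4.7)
The plan is to construct $\Psi$ by gluing together locally defined candidates, one on each level set $f_\al(D)$, using univalence of the Loewner chain to invert. For each $\al\in\mathbb{T}$, since $f_\al\colon D\to f_\al(D)$ is a biholomorphism onto its image, I would set
\begin{equation*}
\Psi_\al\colon f_\al(D)\to \C^q,\qquad \Psi_\al\doteq g_\al\circ f_\al^{-1}.
\end{equation*}
By construction $\Psi_\al$ is holomorphic and $g_\al=\Psi_\al\circ f_\al$ on $D$. It remains to verify compatibility of the family $\{\Psi_\al\}$ and to glue.

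The compatibility step is the heart of the argument. For $0\leq \al\leq \beta$ one has $f_\al(D)\subset f_\beta(D)$; I want to show $\Psi_\beta|_{f_\al(D)}=\Psi_\al$. Pick $w\in f_\al(D)$ and write $w=f_\al(z)$ for a unique $z\in D$. Because $(f_\al)$ is associated with $(\v_{\al,\beta})$, the functional equation~(\ref{functional}) gives $f_\al(z)=f_\beta(\v_{\al,\beta}(z))$, hence $f_\beta^{-1}(w)=\v_{\al,\beta}(z)$. Using the analogous functional equation $g_\beta\circ\v_{\al,\beta}=g_\al$ that expresses that $(g_\al)$ is associated with the same evolution family, I get
\begin{equation*}
\Psi_\beta(w)=g_\beta\bigl(f_\beta^{-1}(w)\bigr)=g_\beta\bigl(\v_{\al,\beta}(z)\bigr)=g_\al(z)=\Psi_\al(w).
\end{equation*}

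Since each $f_\al(D)$ is open (open mapping theorem for univalent holomorphic maps), the union $U\doteq \cup_{\al\in\mathbb{T}}f_\al(D)$ is an open subset of $\C^q$, and it is exhausted by the increasing nested family $\{f_\al(D)\}_{\al\in\mathbb{T}}$. The compatibility relation $\Psi_\beta|_{f_\al(D)}=\Psi_\al$ then allows the $\Psi_\al$ to be glued to a single holomorphic mapping $\Psi\colon U\to \C^q$, and $g_\al=\Psi\circ f_\al$ holds for every $\al\in\mathbb{T}$ by construction.

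The proof is essentially formal: there is no serious analytic obstacle, only a bookkeeping step. The one subtle point worth stressing is that univalence is used \emph{only} for $f_\al$, never for $g_\al$; a ``subordination chain associated with $(\v_{\al,\beta})$'' is here understood in the sense that $g_\beta\circ \v_{\al,\beta}=g_\al$ for $\al\leq \beta$, and this is all that is needed in the compatibility identity above. Uniqueness of $\Psi$ is automatic once one fixes the domain $U$, since $\Psi$ is forced by $\Psi\circ f_\al = g_\al$ and each $f_\al$ is surjective onto $f_\al(D)$.
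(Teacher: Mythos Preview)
Your argument is correct and is the standard direct proof: define $\Psi_\al\doteq g_\al\circ f_\al^{-1}$ on $f_\al(D)$, verify compatibility via the two functional equations, and glue on the increasing union. Note, however, that the present paper does not supply its own proof of this theorem; it is quoted as a result from \cite{Arosio-Bracci-Hamada-Kohr}, so there is nothing in the paper to compare your argument against.
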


In what follows we focus on special types  of $\mathbb{N}$-evolution families and $\mathbb{N}$-Loewner chains.

\begin{definition}
We denote  $\mathcal{L}(\mathbb{C}^q)$ and $\mathcal{A}(\mathbb{C}^q)$  the sets of $\mathbb{C}$-linear endomorphisms and $\mathbb{C}$-linear automorphisms  of $\mathbb{C}^q$.
Let $A\in \mathcal{L}(\mathbb{C}^q)$. The {\sl spectrum} $\sigma(A)$ of $A$ is the set of its eigenvalues. The {\sl spectral radius} $\rho(A)$ is defined as $\max_{\lambda\in\sigma(A)}|\lambda|.$

Let $D$ be a  domain  in $\mathbb{C}^q$ containing $0$. 
A $\mathbb{N}$-evolution family $(\v_{n,m})$ on $D$   is a {\sl dilation $\mathbb{N}$-evolution family} if for all $  n\geq 0$,
\begin{equation}\label{discrete dilation type}
 \varphi_{n,n+1}(z)=A(z)+O(|z|^2),
\end{equation}
with $A\in \mathcal{A}(\mathbb{C}^q)$ such that $\rho(A)<1.$
A $\mathbb{N}$-Loewner chain $(f_n\colon D\to \mathbb{C}^q)$ is a {\sl locally bounded $\mathbb{N}$-Loewner chain} if for all $n\geq 0$, $$f_n(z)=A^{-n}(z)+O(|z|^2),$$ where
 $A\in \mathcal{A}(\mathbb{C}^q)$ is such that $\rho(A)<1$ and the family $(A^n\circ f_n)$ is uniformly bounded in a neighborhood of the origin.

On complete hyperbolic domains, the dynamics of dilation $\mathbb{N}$-evolution families is uniformly contractive, as the following lemma shows. A reference for complete hyperbolic manifolds and the Kobayshi distance is \cite{Kobayashi}.
\begin{lemma}\label{nuovo}
Let $D\subset \C^q$ be a complete hyperbolic domain and  let $(\varphi_{n,m})$   be dilation $\mathbb{N}$-evolution family  on $D$. Then  the basin of attraction of the origin at time $n\geq 0$
 $$\mathfrak{A}(n)\doteq\{z\in D: \lim_{m\to \infty} \varphi_{n,m}(z)=0\}$$ is the whole $D$,
and for all $n\geq 0$ the convergence $\lim_{m\to \infty} \varphi_{n,m}(z)=0$ is uniform on compact subsets.

\end{lemma}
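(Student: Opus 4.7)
My plan is to combine a local contraction estimate in a small ball around the origin with a normal families argument that propagates local attraction to all of $D$. The key inputs are the spectral contraction $\rho(A)<1$ and the complete hyperbolicity of $D$, which ensures that Kobayashi balls are relatively compact.

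\textbf{Step 1 (uniform local contraction).} Since $\rho(A)<1$, I choose a norm $\|\cdot\|$ on $\C^q$ adapted to $A$ so that $\|A\|=:c_1<1$. Each $\v_{n,n+1}$ fixes $0$, so by contraction of the Kobayashi pseudodistance, $\v_{n,n+1}$ preserves every Kobayashi ball $\{z\in D: k_D(0,z)\le R\}$; completeness of $D$ makes these balls relatively compact in $D$, so the family $\{\v_{n,n+1}\}_n$ is uniformly bounded on a Euclidean neighborhood of $0$. Since all these maps share the same linear part $A$, Cauchy's estimates at $0$ yield a uniform constant $M>0$ with $\|\v_{n,n+1}(z)-Az\|\le M\|z\|^2$ on some ball around $0$. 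Pick $r>0$ small enough that $c:=c_1+Mr<1$. Then on $B_r:=\{\|z\|<r\}$ one has $\|\v_{n,n+1}(z)\|\le c\|z\|<r$, so $\v_{n,n+1}(B_r)\subset B_r$ for every $n$, and iterating gives $\|\v_{n,m}(z)\|\le c^{m-n}\|z\|$ for all $z\in B_r$ and all $m\ge n$, proving uniform convergence to $0$ on $B_r$.

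\textbf{Step 2 (globalization via normality).} Fix $n\ge 0$ and a compact $K\subset D$, and set $R:=\max_{z\in K}k_D(0,z)$. By Kobayashi contraction, $\v_{n,m}(K)\subset\{z\in D: k_D(0,z)\le R\}$, which is a compact subset of $D$ by completeness; hence $(\v_{n,m})_{m\ge n}$ is locally bounded in $\mathrm{Hol}(D,\C^q)$ and therefore normal by Montel's theorem. Let $\psi=\lim_{k}\v_{n,m_k}$ be any subsequential locally uniform limit on $D$. Then $\psi(0)=0$ and $d\psi_0=\lim_k A^{m_k-n}=0$ since $\rho(A)<1$; moreover, by Step 1, $\psi$ vanishes identically on $B_r$, so the identity principle forces $\psi\equiv 0$ on the connected domain $D$. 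Since every subsequential limit equals $0$, the full sequence $(\v_{n,m})_{m\ge n}$ converges to $0$ locally uniformly on $D$, proving simultaneously that $\mathfrak{A}(n)=D$ and that the convergence is uniform on compact subsets.

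\textbf{Main obstacle.} The delicate point is Step 1: the hypothesis $\v_{n,n+1}(z)=Az+O(|z|^2)$ does not a priori supply any uniform-in-$n$ bound on the quadratic remainder, and extracting such a bound is exactly where complete hyperbolicity enters, through Kobayashi contraction at the fixed point $0$ combined with Cauchy estimates. Once this uniform local contraction is in hand, Step 2 is a routine normal families plus identity principle argument.
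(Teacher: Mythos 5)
Your proof is correct. The local contraction step (your Step 1) is exactly the content the paper outsources to its Lemma~\ref{estimate} and Lemma~\ref{taylor}: after putting $A$ in a norm with $\|A\|<1$, complete hyperbolicity gives uniform boundedness of $(\v_{n,n+1})$ near $0$ via Kobayashi contraction, and then Cauchy estimates bound the quadratic remainder uniformly in $n$, yielding an invariant ball $r\mathbb{B}$ on which $\|\v_{n,m}(z)\|\le c^{m-n}\|z\|$. Where you diverge is in the globalization. The paper proves $\mathfrak{A}(n)$ is both open and closed in $D$: for openness, if $z\in\mathfrak{A}(n)$ then some $\v_{n,m}(z)$ lands in the Kobayashi ball $\Omega(0,\varepsilon/2)$, and by Kobayashi contraction $\v_{n,m}$ pushes the ball $\Omega(z,\varepsilon/2)$ into $\Omega(0,\varepsilon)\subset\mathfrak{A}(m)$; closedness is by a symmetric argument. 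Connectedness of $D$ then gives $\mathfrak{A}(n)=D$, and the locally uniform convergence is read off from the same estimate. You instead run a Montel argument: local boundedness of $(\v_{n,m})_{m\ge n}$ (again via Kobayashi contraction and completeness) gives normality, every subsequential limit vanishes on $r\mathbb{B}$ by Step 1, and the identity principle forces it to vanish on all of $D$; since all subsequential limits coincide, the full sequence converges locally uniformly. Both routes rest on the same two pillars (uniform local contraction, Kobayashi contraction at the fixed origin), but your normality argument delivers the ``uniform on compacta'' conclusion in the same breath as $\mathfrak{A}(n)=D$, whereas the paper's open-and-closed argument establishes the set equality first and then has to observe separately that the convergence it produced was locally uniform all along. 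Neither approach is shorter than the other; yours is perhaps slightly cleaner to write in full, the paper's is more self-evidently ``topological''.
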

\begin{proof}
Up to a linear change of coordinates, we may assume that $ \max_{z\in \mathbb{C}^q}\frac{|A(z)|}{|z|}<1$. Lemma \ref{estimate} yields then  that there exists  $\varepsilon >0$ such that the Kobayashi ball $\Omega(0,\varepsilon)$ centered in the origin of radius $\varepsilon$ is contained in the set $\bigcap_{m\geq 0}\mathfrak{A}(m)$. For all $n\geq 0$, the set  $\mathfrak{A}(n)$ is an open subset of $D$. Indeed,  if $z\in \mathfrak{A}(n)$, there exists $m>0$ such that $\varphi_{n,m}(z)\subset \Omega(0,\varepsilon/2)$. Since holomorphic mappings decrease the Kobayashi distance, one has $$\v_{n,m}(\Omega(z,\varepsilon/2 ))\subset \Omega(0,\varepsilon)\subset \mathfrak{A}(m),$$ thus $\Omega(z,\varepsilon/2 )\subset \mathfrak{A}(n)$.

The set $\mathfrak{A}(n)$ is also a closed subset of $D$. Indeed let $z$ be a point in the closure of  $\mathfrak{A}(n)$.
Then there exist a point $w\in\mathfrak{A}(n)$ such that $k_D(z,w)<\varepsilon/2$. Let $u>0$ be such that $\varphi_{n,u}(w)\in  \Omega(0,\varepsilon/2)$.  Since holomorphic mappings decrease  the Kobayashi distance  one has $$\varphi_{n,u}(z)\subset  \Omega(0,\varepsilon)\subset \mathfrak{A}(u),$$
thus $z\in \mathfrak{A}(n)$.

Since $D$ is connected one has $\mathfrak{A}(n)=D$. The convergence is local uniform and hence  uniform on compact subsets.

\end{proof}

A \textit{triangular mapping} is a mapping $T\colon\mathbb{C}^q\rightarrow\mathbb{C}^q$ whose components $T^{(i)}(z)$ satisfy
$$T^{(1)}(z)=\lambda_1z_1,\quad T^{(i)}(z)=\lambda_iz_i+t^{(i)}(z_1,z_2,\dots,z_{i-1}),\ 2\leq i\leq q,$$
where $\lambda_i\in\mathbb{C}$ and $t^{(i)}$ is a polynomial in $i-1$ variables fixing the origin. Its {\sl degree } is the maximum of the degree of its components.
If $\lambda_i\neq 0$ for all $1\leq i\leq q$, the mapping $T$ is called a {\sl triangular automorphism}.
This is indeed an automorphism of $\C^q$, since we can iteratively write its inverse, which is still a triangular automorphism.
Since the composition of two triangular automorphisms is still a triangular automorphism, they form a subgroup of $\aut(\mathbb{C}^q).$
A \textit{triangular dilation $\mathbb{N}$-evolution family} is a dilation $\mathbb{N}$-evolution family $(T_{n,m},\mathbb{C}^q)$ such that each $T_{n,n+1}$, and hence every $T_{n,m}$, is a triangular automorphism of $\mathbb{C}^q$. 
A triangular dilation $\mathbb{N}$-evolution family $(T_{n,m})$ has {\sl uniformly bounded coefficients} if the family $(T_{n,n+1})$ has uniformly bounded coefficients.  A  triangular dilation $\mathbb{N}$-evolution family $(T_{n,m})$ has {\sl uniformly bounded degree}  if the family $(T_{n,n+1})$ has uniformly bounded degree.
\end{definition}

\begin{definition}
Let $D\subset\C^q$ be a complete hyperbolic domain.
A dilation  $\mathbb{N}$-evolution family $(\v_{n,m}\colon D\to D )$ and  a  triangular dilation $\mathbb{N}$-evolution family $(T_{n,m})$ with uniformly bounded degree and uniformly bounded coefficients  are {\sl  locally conjugate} if there exists, on a  ball $r\mathbb{B}\subset D$ satisfying $$\v_{n,m}(r\mathbb{B})\subset(r\mathbb{B}),\quad 0\leq n\leq m,$$  a uniformly bounded family of  holomorphic mappings  $(h_n\colon r\mathbb{B}\to \mathbb{C}^q)$ such that $h_n(z)=z+O(|z|^2)$ for all  $n\geq 0,$ and such that 
\begin{equation}\label{coniugio}
h_m\circ\v_{n,m}(z)=T_{n,m}\circ h_n(z), \quad z\in r\mathbb{B},\ 0\leq n\leq m.
\end{equation}
\end{definition}

\begin{proposition}\label{inter}
Let $D\subset\C^q$ be a complete hyperbolic domain.
Assume that a dilation  $\mathbb{N}$-evolution family $(\v_{n,m}\colon D\to D)$ and  a  triangular dilation $\mathbb{N}$-evolution family $(T_{n,m})$ with uniformly bounded degree and uniformly bounded coefficients  are  locally conjugate by $(h_n\colon r\mathbb{B}\to \mathbb{C}^q)$.
Then for each  fixed $n\geq 0$ the sequence $(T_{n,m}^{-1}\circ h_m\circ \v_{n,m})_{m\geq n}$ is eventually defined on each compact subset $K\subset D$,  its limit $$h_n^{\sf e}\doteq\lim_{m\to\infty}T_{n,m}^{-1}\circ h_m\circ \v_{n,m}$$ exists uniformly on compacta on $D$, and satisfies  $h_n^{\sf e}|_{ r\mathbb{B}}=h_n$. The family $(h_n^{\sf e}\colon D\to \C^q)$   satisfies
$$h_m^{\sf e}\circ \v_{n,m}(z)=T_{n,m}\circ h_n^{\sf e}(z), \quad z\in D,\ 0\leq n\leq m.$$
\end{proposition}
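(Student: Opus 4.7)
The plan is to exploit the observation that the sequence $(T_{n,m}^{-1}\circ h_m\circ \v_{n,m})_{m\geq n}$ should be eventually \emph{constant} on any compact subset of $D$, making uniform convergence on compacta automatic.

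First I would fix $n\geq 0$ and a compact $K\subset D$ and apply Lemma \ref{nuovo}: since $\v_{n,m}\to 0$ uniformly on $K$, there exists $M=M(K)\geq n$ with $\v_{n,m}(K)\subset r\B$ for all $m\geq M$. Because each $T_{n,m}$ is an automorphism of $\C^q$, the composition $T_{n,m}^{-1}\circ h_m\circ \v_{n,m}$ is then well-defined and holomorphic on $K$ for every $m\geq M$.

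The key step is stability. For $m\geq M$ and $z\in K$, put $w=\v_{n,m}(z)\in r\B$ and apply the local conjugation (\ref{coniugio}) between the consecutive times $m$ and $m+1$ to obtain $h_{m+1}(\v_{m,m+1}(w))=T_{m,m+1}(h_m(w))$. Combined with the evolution family relations $\v_{n,m+1}=\v_{m,m+1}\circ \v_{n,m}$ and $T_{n,m+1}^{-1}=T_{n,m}^{-1}\circ T_{m,m+1}^{-1}$, this rearranges to
\[
T_{n,m+1}^{-1}\circ h_{m+1}\circ \v_{n,m+1}(z)=T_{n,m}^{-1}\circ h_m\circ \v_{n,m}(z).
\]
Hence the sequence is constant on $K$ for $m\geq M$; its limit $h_n^{\sf e}$ exists (trivially uniformly) on $K$ and is holomorphic there. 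Since $K$ was arbitrary, $h_n^{\sf e}\colon D\to \C^q$ is a well-defined holomorphic map. On $r\B$ itself one may take $M=n$ (because $\v_{n,m}(r\B)\subset r\B$), and (\ref{coniugio}) directly yields $T_{n,m}^{-1}\circ h_m\circ \v_{n,m}=h_n$, so $h_n^{\sf e}|_{r\B}=h_n$.

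For the conjugation on all of $D$: given $z\in D$ and $0\leq n\leq m$, pick $k$ so large that $\v_{n,k}(z)\in r\B$; by the semigroup property $\v_{m,k}(\v_{n,m}(z))=\v_{n,k}(z)\in r\B$ as well. The stabilization yields $h_n^{\sf e}(z)=T_{n,k}^{-1}\circ h_k\circ \v_{n,k}(z)$ and $h_m^{\sf e}(\v_{n,m}(z))=T_{m,k}^{-1}\circ h_k\circ \v_{n,k}(z)$, and composing the first with $T_{n,m}$ — using $T_{n,m}\circ T_{n,k}^{-1}=T_{m,k}^{-1}$, which follows from $T_{n,k}=T_{m,k}\circ T_{n,m}$ — gives the desired identity. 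The whole argument is really just the stabilization observation; after that one only bookkeeps with the evolution family relations. The uniform boundedness of $(h_n)$ and of the coefficients/degrees of $(T_{n,m})$ play no role here, and presumably enter only in the construction of the locally conjugate pair in the first place.
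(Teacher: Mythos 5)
Your proof is correct and follows essentially the same route as the paper's: both arguments rest on the observation that, by Lemma \ref{nuovo}, the orbit of any compact set eventually lands in $r\B$, after which the local conjugation (\ref{coniugio}) makes the sequence $T_{n,m}^{-1}\circ h_m\circ\v_{n,m}$ stabilize. The only cosmetic difference is that the paper compares the $m$-th term to the fixed $u$-th term in one step, while you compare consecutive terms; your closing remark that uniform boundedness of $(h_n)$ and of the degrees/coefficients of $(T_{n,m})$ is not used in this particular proposition is also accurate.
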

\begin{proof}
 Let $K\subset D$ be a compact subset. By Lemma \ref{nuovo}, for all $n\geq 0$  there exists 
$u\geq n$ such that  $\v_{n,u}(K)\subset  r\mathbb{B}$ . Then  for $m\geq u$, 
$$ T_{n,m}^{-1}\circ h_m\circ \varphi_{n,m}|_K=T_{n,u}^{-1}\circ (T_{u,m}^{-1}\circ h_m\circ \v_{u,m})\circ \v_{n,u}|_K=T_{n,u}^{-1}\circ h_{u}\circ \v_{n,u}|_K$$ by (\ref{coniugio}), thus the sequence $(T_{n,m}^{-1}\circ h_m\circ \v_{n,m})_{m\geq n}$ converges uniformly on compacta. By  (\ref{coniugio}) we have, $$T_{n,m}^{-1}\circ h_m\circ \varphi_{n,m}(z)=h_n(z), \quad z\in r\mathbb{B},\ n\leq m,$$ thus $h_n^{\sf e}|_{r\mathbb{B}}=h_n$.

Finally
$$h_m^{\sf e}\circ \v_{n,m}=\lim_{j\to\infty}T_{m,j}^{-1}\circ h_j\circ \v_{m,j}\circ\v_{n,m}=T_{n,m}\circ \lim_{j\to\infty}T_{n,j}^{-1}\circ h_j\circ \v_{n,j}=T_{n,m}\circ h_n^{\sf e}.$$

\end{proof}
\begin{definition}
We call the mappings  $h_n^{\sf e}$    {\sl intertwining mappings}. Notice that since $h_n^{\sf e}|_{ r\mathbb{B}}=h_n$, the family $(h_n^{\sf e}\colon D\to \C^q)$ is uniformly bounded in a neighborhood of the origin.   From now on we will denote $h_n^{\sf e}$ simply by $h_n$.
\end{definition}
\begin{proposition}\label{interuniv}
Let $D\subset\C^q$ be a complete hyperbolic domain.
Assume that a dilation  $\mathbb{N}$-evolution family $(\v_{n,m}\colon D\to D)$ and  a  triangular dilation $\mathbb{N}$-evolution family $(T_{n,m})$ with uniformly bounded degree and uniformly bounded coefficients  are  locally conjugate.
Then each intertwining mapping $h_n\colon D\to \C^q$ is univalent.
\end{proposition}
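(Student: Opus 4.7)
The strategy is to show that all the $h_n$ are univalent on a common smaller ball $r'\mathbb{B}\subset r\mathbb{B}$, and then propagate injectivity from this ball to all of $D$ using the intertwining relation of Proposition \ref{inter} and the uniform contraction provided by Lemma \ref{nuovo}.

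First I would establish uniform local univalence: there exists $0 < r' \leq r$, independent of $n$, such that each $h_n|_{r'\mathbb{B}}$ is injective. Writing $h_n(z) = z + g_n(z)$, the family $(g_n)$ is uniformly bounded on $r\mathbb{B}$, and each $g_n$ vanishes to second order at $0$. Cauchy estimates then yield a uniform bound $\|dg_n(z)\| \leq C|z|$ on $(r/2)\mathbb{B}$, with $C$ depending only on $r$, $q$ and the uniform bound on $(g_n)$. Choosing $r' > 0$ so small that $Cr' \leq 1/2$, the mean value inequality gives
\[
|h_n(z_1)-h_n(z_2)| \geq \tfrac{1}{2}|z_1-z_2|, \qquad z_1,z_2 \in r'\mathbb{B},
\]
uniformly in $n$, proving injectivity of each $h_n|_{r'\mathbb{B}}$.

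Second, I would fix $n \geq 0$ and take $z_1, z_2 \in D$ with $h_n(z_1) = h_n(z_2)$. By Lemma \ref{nuovo}, $\varphi_{n,m}(z_j) \to 0$ as $m \to \infty$, so for $m$ large enough both $\varphi_{n,m}(z_1)$ and $\varphi_{n,m}(z_2)$ lie in $r'\mathbb{B}$. From Proposition \ref{inter} we get
\[
h_m(\varphi_{n,m}(z_j)) = T_{n,m}(h_n(z_j)), \quad j = 1, 2,
\]
so $h_m(\varphi_{n,m}(z_1)) = h_m(\varphi_{n,m}(z_2))$. Since $h_m$ is injective on $r'\mathbb{B}$ we conclude $\varphi_{n,m}(z_1) = \varphi_{n,m}(z_2)$, and since $\varphi_{n,m}$ is univalent on $D$ we finally obtain $z_1 = z_2$.

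The only substantive step is the uniform local univalence; once this is in hand the rest of the argument is forced by the intertwining identity and the contraction property. I do not expect serious difficulty: the control of $dg_n$ near $0$ via Cauchy estimates is standard, and Lemma \ref{nuovo} supplies more convergence than is actually needed here, since we only require $\varphi_{n,m}(\{z_1,z_2\})\subset r'\mathbb{B}$ for some sufficiently large $m$.
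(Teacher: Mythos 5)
Your argument is correct and follows essentially the same route as the paper: uniform univalence of $(h_m)$ on a fixed ball near the origin, then propagation to all of $D$ via the intertwining relation of Proposition \ref{inter}, the contraction property of Lemma \ref{nuovo}, and the univalence of the evolution family. The only real difference is that the paper gets the uniform local univalence by citing Lemma \ref{koebe} (a uniform Koebe-type statement for bounded families with fixed linear part), whereas you reprove that fact from scratch via Cauchy estimates on $dg_n$; this is fine and self-contained, but adds nothing beyond what the cited lemma already supplies. One cosmetic remark: the paper phrases the second step as a contradiction while you run it as a direct implication $h_n(z_1)=h_n(z_2)\Rightarrow z_1=z_2$, which is logically identical.
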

\begin{proof}
Assume that there exist $z\neq w$ in $D$ and $n\geq 0$ such that $h_n(z)=h_n(w).$ Then by $(\ref{coniugio})$, 
\begin{equation}\label{blabla}
h_m(\v_{n,m}(z))=h_m(\v_{n,m}(w)), \quad 0\leq n\leq m.
\end{equation}
By Lemma \ref{koebe} there exists a  ball $s\mathbb{B}$ such that for all $m\geq 0$ the mapping $h_m|_{s\mathbb{B}}$ is univalent. By Lemma \ref{nuovo} there exists $m\geq n$ such that $\v_{n,m}(z)\cup \v_{n,m}(w)\subset s\mathbb{B}.$ But $\v_{n,m}(z)\neq \v_{n,m}(w)$ since $\v_{n,m}$ is a univalent mapping, hence (\ref{blabla})  contradicts the univalence of $h_m|_{s\mathbb{B}}$.
\end{proof}

\section{Non-autonomous Poincar\'e-Dulac method}\label{spectralsection}
For a detailed exposition of the classical  Poincar\'e-Dulac method, see \cite[Appendix]{Rudin-Rosay}. 
We will need the non-autonomous version of the Poincar\'e-Dulac method developed  in \cite{Peters}  in the case of $\mathbb{N}$-evolution families of holomorphic automorphisms of $\mathbb{C}^q$.  
We will give alternative proofs of this method in Propositions \ref{core} and \ref{discreteresult} in which we show also that, in absence of real resonances (defined below), it is possible to find a local conjugacy between a dilation $\mathbb{N}$-evolution family and its linear part. 

In what follows we identify a linear automorphism $A\in \mathcal{A}(\mathbb{C}^q)$ with its associated matrix with respect to the canonical basis.
\begin{definition}
A {\sl real multiplicative resonance} for $A\in\mathcal{A}(\mathbb{C}^q)$  with eigenvalues $\lambda_i$  is an identity $$|\lambda_j|=|\lambda_1^{i_1}\dots\lambda_q^{i_q}|,$$ where $i_j\geq 0$, and $\sum_j i_j\geq 2$. If for every $1\leq j\leq q$ we have $|\lambda_j|<1$, real multiplicative resonances can occur only in a finite number. Moreover, if $0<|\lambda_q|\leq\cdots\leq |\lambda_1|<1$, then 
\begin{equation}\label{risontriang}|\lambda_j|=|\lambda_1^{i_1}\dots\lambda_q^{i_q}|\Rightarrow i_j=i_{j+1}=\dots=i_q=0.
\end{equation}
\end{definition}
\begin{definition}\label{spectral}
An automorphism $A\in \mathcal{A}(\mathbb{C}^q)$ is in {\sl optimal form} if
\begin{enumerate}
\item[i)] $A$ is  in lower-triangular $\varepsilon$-Jordan normal form for some $\varepsilon>0$, that is in lower triangular Jordan normal form with the underdiagonal multiplied by $\varepsilon$,
\item[ii)] if the diagonal of  $A$ is $(\lambda_1,\dots,\lambda_q)$ then  $1>|\lambda_1|\geq\dots\geq |\lambda_q|>0,$
\item[iii)] one has $\max_{z\in \mathbb{C}^q}\frac{|A(z)|}{|z|}<1$.
\end{enumerate}
Note that any linear automorphism can be put in optimal form by a linear change of coordinates. 

Let $A\in \mathcal{A}(\mathbb{C}^q)$ be in optimal form. For $1\leq j\leq q$ let $\pi_j\colon \C^q\to \C$ be the projection to the $j$-th coordinate.
Let $i\geq 2$ and let $\mathcal{H}_i$ be the vector space of all holomorphic maps $H:\mathbb{C}^q\rightarrow \mathbb{C}^q$ whose components $\pi_j\circ H$ are homogeneus polynomials of degree $i$. 
A basis  for this vector space is easily described: let $1\leq j\leq q$,  let $I\in \mathbb{N}^q$ be a multi-index of absolute value $|I|=i,$ and define $X^j_I$ such that $$\pi_l\circ X^j_I\doteq \delta_{l,j}z^I,\quad 1\leq l\leq q.$$ 
The set $\mathfrak{B}\doteq\{ X^j_I:\ 1\leq j\leq q,|I|=q\}$ is a basis of  $\mathcal{H}_i$.
Next we define a splitting of  $\mathcal{H}_i$ by specifying a partition of  the basis $\mathfrak{B}$.

We set  $X^j_I\in \mathfrak{B}_r$ if $|\lambda_j\lambda^{-I}|=1$.
The {\sl real resonant subspace} $\mathcal{R}_i$ is the vector subspace spanned by the vectors in  $\mathfrak{B}_r$.

We set  $X^j_I\in \mathfrak{B}_s$ if $|\lambda_j\lambda^{-I}|<1$.
The {\sl stable subspace} $\mathcal{S}_i$ is the vector subspace spanned by the vectors in  $\mathfrak{B}_s$.

We set  $X^j_I\in \mathfrak{B}_u$ if $|\lambda_j\lambda^{-I}|>1$.
The {\sl unstable subspace} $\mathcal{U}_i$ is the vector subspace spanned by the vectors in  $\mathfrak{B}_u$.

This defines the splitting  $\mathcal{H}_i=\mathcal{R}_i\oplus\mathcal{S}_i\oplus\mathcal{U}_i,$ with  projections $\pi_r,$ $\pi_s,$ and $\pi_u$.

If $F\in \mathcal{L}(\mathbb{C}^q)$, then $H\mapsto H\circ F$ and $H\mapsto F\circ H$ are endomorphisms of $\mathcal{H}_i$. We define the linear operator $\Gamma\colon \mathcal{H}_i\to \mathcal{H}_i$ as $H\mapsto A\circ H\circ A^{-1}.$
\end{definition}
The next lemma justifies the terms ``stable'' and ``unstable''.
\begin{lemma}\label{figo}
The stable subspace $\mathcal{S}_i$ is $\Gamma$-totally invariant and $\rho(\Gamma|_{\mathcal{S}_i})<1$. Indeed  $$\sp (\Gamma|_{\mathcal{S}_i})= \{\lambda_j\lambda^{-I}: X^j_I\in \mathfrak{B}_s\}.$$
The unstable subspace  $\mathcal{U}_i$ is $\Gamma$-totally invariant and  $\rho(\Gamma^{-1}|_{\mathcal{U}_i})<1.$
Indeed $$\sp (\Gamma|_{\mathcal{U}_i})= \{\lambda_j\lambda^{-I}: X^j_I\in \mathfrak{B}_u\}.$$ 
\end{lemma}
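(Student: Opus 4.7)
The plan is to exploit the structural constraint, built into the optimal form, that the $\varepsilon$-perturbation of $A=D+\varepsilon N$ only couples coordinates inside a single Jordan block, where all diagonal eigenvalues share the same modulus. As a consequence the refined decomposition of $\mathcal{H}_i$ by the modulus $|\lambda_j\lambda^{-I}|$ will turn out to be $\Gamma$-invariant, and in particular $\mathcal{S}_i$ and $\mathcal{U}_i$ will be both invariant and triangulable.

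More precisely, in the $\varepsilon$-Jordan form every Jordan block of $A$ carries a single eigenvalue, so the subdiagonal entries (all equal to $\varepsilon$ or to $0$) only link coordinates whose diagonal eigenvalues coincide. A finite expansion of $(D+\varepsilon N)^{-1}=D^{-1}\sum_{k\geq 0}(-\varepsilon D^{-1}N)^k$ shows that $A^{-1}$ shares the same block structure. Hence if $w=A^{-1}z$, each $w_k$ is a linear combination of those $z_\ell$ with $\lambda_\ell=\lambda_k$, while $A(e_j)$ lies in the span of the $e_l$ with $\lambda_l=\lambda_j$. Multiplying out $\Gamma(X^j_I)=(A^{-1}z)^I\cdot A(e_j)$ therefore yields
\begin{equation*}
\Gamma(X^j_I)=\lambda_j\lambda^{-I}X^j_I+\varepsilon\sum_{(l,J)} c_{l,J}\,X^l_J,
\end{equation*}
where the sum ranges over pairs $(l,J)\neq(j,I)$ satisfying $|\lambda_l\lambda^{-J}|=|\lambda_j\lambda^{-I}|$.

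Setting $V_c=\mathrm{span}\{X^j_I:|\lambda_j\lambda^{-I}|=c\}$, the identity above immediately gives $\Gamma(V_c)\subseteq V_c$ for every $c$, so $\mathcal{S}_i=\bigoplus_{c<1}V_c$ and $\mathcal{U}_i=\bigoplus_{c>1}V_c$ are both $\Gamma$-invariant; since $\mathcal{H}_i$ is finite dimensional and $\Gamma$ is an automorphism, these inclusions upgrade at once to total invariance. To compute the spectra I order each $V_c$ by any total order refining the rule $(l,J)\prec(j,I)$ whenever $J$ is strictly smaller than $I$ in the reverse lexicographic order on multi-indices, or $J=I$ with $l>j$. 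In this order the expansion above makes $\Gamma|_{V_c}$ lower triangular with diagonal entries $\lambda_j\lambda^{-I}$, so $\sigma(\Gamma|_{V_c})=\{\lambda_j\lambda^{-I}:X^j_I\in V_c\}$; taking the union over $c<1$ (respectively $c>1$) gives the stated spectra, and the bounds on $\rho(\Gamma|_{\mathcal{S}_i})$ and $\rho(\Gamma^{-1}|_{\mathcal{U}_i})$ follow at once.

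The main obstacle I anticipate is the book-keeping needed to verify that the basis vectors $X^l_J$ appearing in $\Gamma(X^j_I)$ really do stay inside $V_c$ and really are strictly smaller than $(j,I)$ in the chosen order; once this is settled, the remainder is elementary linear algebra on a finite-dimensional space.
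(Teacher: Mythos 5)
Your proof is correct, and it takes a genuinely different route from the paper's. The paper proves $\sp(\Gamma|_{\mathcal{S}_i})=\{\lambda_j\lambda^{-I}\}$ by a deformation argument: it observes that $A$ is conjugate (by diagonal matrices) to every $\varepsilon'$-Jordan form $\gamma(t)$ with the same diagonal, constructs a path from $A$ to its diagonal part, uses that $\Xi(\gamma(t))$ is conjugate to $\Xi(A)$ for $t<1$, and then invokes continuity of the spectrum as $t\to 1$ to reduce to the diagonal case, where $\Xi$ is itself diagonal in the basis $\{X^j_I\}$. You instead exhibit the matrix of $\Gamma$ directly: you use that $A$ and $A^{-1}$ share the same block structure to show that $\Gamma(X^j_I)$ lands in $V_{|\lambda_j\lambda^{-I}|}$, which already gives invariance of each level set $V_c$ and hence of $\mathcal{S}_i$ and $\mathcal{U}_i$; then you order the basis so that $\Gamma|_{V_c}$ becomes lower-triangular with the expected diagonal, which pins down the spectrum by elementary linear algebra. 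Your order works: a monomial $z^J$ appearing in $(A^{-1}z)^I$ necessarily satisfies $\sum_{k>m}J_k\leq\sum_{k>m}I_k$ for all $m$ (since $z_k$ only occurs in $w_\ell$ with $\ell\geq k$), which forces $J<I$ in reverse lexicographic order whenever $J\neq I$, and the remaining tie $J=I$, $l=j+1$ is handled by your secondary rule $l>j$. What your approach buys is a completely explicit, continuity-free computation that also makes the invariance of $\mathcal{R}_i$, $\mathcal{S}_i$, $\mathcal{U}_i$ transparent rather than merely asserted; the cost is the combinatorial bookkeeping for the order, which you correctly flag and which, as checked above, does go through. One small caveat: the paper's shortcut hides a point that your argument makes explicit, namely that the conjugating matrices between the various $\varepsilon$-Jordan forms are diagonal and hence preserve the splitting; your $V_c$-decomposition dispenses with that issue entirely.
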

\begin{proof}

Since the $\Gamma$-invariance is an straightforward calculation, we prove the statement concerning the spectrum of $\Gamma|_{\mathcal{S}_i}.$ The automorphism $A$ is conjugate to any automorphism obtained multiplying the underdiagonal by a positive constant. Thus there exists a continuous path  $\gamma\colon [0,1]\to \mathcal{A}(\mathbb{C}^q)$ such that  $\gamma(0)=A$ and $\gamma(1)=(\lambda_1z_1,\dots,\lambda_q z_q )$, with $\gamma(0)$ conjugated to $\gamma(t)$ for all $t\in[0,1)$. 

Let $M\in \mathcal{A}(\mathbb{C}^q)$. Define $\Xi(M)\in \mathcal{A}(\mathcal{S}_i)$ as $H\mapsto M\circ H\circ M^{-1}.$ 
If $B=M\circ A\circ M^{-1}$, the linear operator $\Gamma|_{\mathcal{S}_i}=\Xi(A)$ is conjugate to the linear operator $\Xi(B).$ Indeed $$B\circ H\circ B^{-1}=M\circ A\circ M^{-1}\circ H\circ M\circ A^{-1}\circ M^{-1},$$ thus $\Xi(B)=     \Xi(M)\circ\Xi(A)\circ{\Xi(M)}^{-1}.$

We have $\lim_{t\to 1}\Xi(\gamma(t))= \Xi(\lambda_1z_1,\dots,\lambda_q z_q)$ and $\Gamma|_{\mathcal{S}_i}=\Xi(A)=\Xi(\gamma(0))$ is conjugate to $\Xi(\gamma(t))$  for all $t\in[0,1)$. Thus $$\sp(\Gamma|_{\mathcal{S}_i})=\sp(\Xi(\lambda_1z_1,\dots,\lambda_q z_q)).$$

It is easy to see that the linear operator $\Xi(\lambda_1z_1,\dots,\lambda_q z_q)$ is diagonalizable and that the basis $\mathfrak{B}_s$ is a basis of eigenvectors such that $$[\Xi(\lambda_1z_1,\dots,\lambda_q z_q)](X^j_I)= \lambda_j\lambda^{-I} X^j_I.$$ Thus $$\sp(\Gamma|_{\mathcal{S}_i})=\sp(\Xi(\lambda_1z_1,\dots,\lambda_q z_q))=   \{\lambda_j\lambda^{-I}: X^j_I\in \mathfrak{B}_s\}   .$$

The same argument works for the spectrum of  $\Gamma|_{\mathcal{U}_i}.$
\end{proof}

\begin{proposition}\label{core}
Let $D\subset \C^q$ be a complete hyperbolic domain.
Let $(\v_{n,m}\colon D\to D)$ be a dilation $\mathbb{N}$-evolution family such that $\v_{n,n+1}(z)=A (z)+O(|z|^2)$ with $A$ in optimal form.
Then for each $i\geq 2$   there exist
\begin{enumerate}
\item[i)] a family  $(k^i_n)$ of polynomial maps $k_n(z)=z+O(|z|^2)$ with uniformly bounded degree and uniformly bounded coefficients, and
\item[ii)] a triangular dilation evolution family $(T^i_{n,m})$  with $T^i_{n,n+1}(z)=A(z)+O(|z|^2)$,  $$\deg T^i_{n,n+1}\leq i-1,$$ and uniformly bounded coefficients such that for all $n\geq 0$,
\begin{equation}\label{rudin} k^i_{n+1}\circ \varphi_{n,n+1}-T^i_{n,n+1}\circ k^i_n=O(|z|^i).
\end{equation}
\end{enumerate}
 If no multiplicative real resonance occurs among the eigenvalues of $A$, then the family $(T^i_{n,m})$ is the linear family $(A^{m-n})$.
\end{proposition}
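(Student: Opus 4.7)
The plan is to proceed by induction on $i \geq 2$. For the base case $i=2$, take $k_n^2(z) = z$ and $T_{n,n+1}^2(z) = A(z)$, both of degree one and with uniformly bounded coefficients; then
$$k_{n+1}^2 \circ \varphi_{n,n+1}(z) - T_{n,n+1}^2 \circ k_n^2(z) = \varphi_{n,n+1}(z) - A(z) = O(|z|^2)$$
by the dilation assumption. For the inductive step, isolate the degree-$i$ component of the current discrepancy, writing
$$k_{n+1}^i \circ \varphi_{n,n+1}(z) - T_{n,n+1}^i \circ k_n^i(z) = P_n^i(z) + O(|z|^{i+1})$$
with $P_n^i \in \mathcal{H}_i$. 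Since each $\varphi_{n,n+1}$ fixes the origin and maps the hyperbolic $D$ into itself, Cauchy estimates on a fixed polydisc contained in $D$ give uniform bounds on its Taylor coefficients; combined with the inductive uniform bounds on $k_n^i$ and $T_{n,n+1}^i$, the sequence $(P_n^i)$ is uniformly bounded in $\mathcal{H}_i$. Now seek $k_n^{i+1} = k_n^i + H_n$ and $T_{n,n+1}^{i+1} = T_{n,n+1}^i + B_n$ with $H_n, B_n \in \mathcal{H}_i$; a direct expansion modulo $O(|z|^{i+1})$ (in which the degree-$i$ contribution of the nonlinear parts of $T_{n,n+1}^i$ and $\varphi_{n,n+1}$ collapses to its linear part $A$) reduces the vanishing condition to the cohomological equation
$$H_{n+1} \circ A - A \circ H_n = B_n - P_n^i,$$
equivalently the recursion $H_{n+1} = \Gamma(H_n) + (B_n - P_n^i) \circ A^{-1}$ in $\mathcal{H}_i$.

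Next, I would solve this recursion using the $\Gamma$-invariant splitting $\mathcal{H}_i = \mathcal{R}_i \oplus \mathcal{S}_i \oplus \mathcal{U}_i$ of Lemma \ref{figo}. On $\mathcal{S}_i$, where $\rho(\Gamma|_{\mathcal{S}_i}) < 1$ gives $\|\Gamma^k|_{\mathcal{S}_i}\| \leq C\mu^k$ for some $\mu < 1$, start from $H_0^s = 0$ and iterate forward; the geometric decay absorbs the uniformly bounded forcing and produces a bounded $(H_n^s)$. On $\mathcal{U}_i$, where $\Gamma$ is invertible with $\rho(\Gamma^{-1}|_{\mathcal{U}_i}) < 1$, solve backwards by the absolutely convergent sum
$$H_n^u = -\sum_{k \geq n} \Gamma^{n-1-k}\bigl(\pi_u((B_k - P_k^i) \circ A^{-1})\bigr),$$
which yields a bounded solution. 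On the resonant piece $\mathcal{R}_i$ no geometric decay is available, so fix $H_n^r \equiv 0$ and solve algebraically for $B_n \in \mathcal{R}_i$ from $\pi_r((B_n - P_n^i) \circ A^{-1}) = 0$; inspection of the operator $H \mapsto \pi_r(H \circ A^{-1})$ on $\mathcal{R}_i$ shows that its diagonal part is the invertible scaling $X_I^j \mapsto \lambda^{-I} X_I^j$, while the off-diagonal entries arising from the underdiagonal of $A^{-1}$ have size $O(\varepsilon)$, so for the $\varepsilon$ furnished by Definition \ref{spectral} it is invertible and $B_n$ is uniquely and boundedly determined.

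The remaining ingredients are triangularity and uniform degree/coefficient bounds. By condition (\ref{risontriang}), every basis element $X_I^j \in \mathfrak{B}_r$ has $i_j = \dots = i_q = 0$, so any $B_n \in \mathcal{R}_i$ depends in its $j$-th component only on $z_1, \dots, z_{j-1}$; hence $T_{n,n+1}^{i+1} = T_{n,n+1}^i + B_n$ is triangular of degree at most $i = (i+1)-1$ with uniformly bounded coefficients, and $k_n^{i+1} = k_n^i + H_n$ is a polynomial of degree at most $i$ with uniformly bounded coefficients by the bounds on $(H_n)$ established in the previous paragraph. When no real multiplicative resonance occurs among the eigenvalues of $A$, $\mathcal{R}_i = \{0\}$ for every $i \geq 2$, so no $B_n$ is ever added and $T_{n,n+1}^i = A$ throughout the induction, giving $T_{n,m}^i = A^{m-n}$. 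The main obstacle is the uniform control of the stable and unstable geometric sums and the invertibility of the resonant correction operator; both rest on the spectral gap of $\Gamma$ provided by Lemma \ref{figo}, together with the smallness of $\varepsilon$ from the optimal-form condition of Definition \ref{spectral}.
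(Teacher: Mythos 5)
Your proof reproduces the paper's argument faithfully: the same induction on $i$, the same reduction to the homological difference equation $H_{n+1}=\Gamma(H_n)+B_n$ in $\mathcal{H}_i$, the same forward/backward resolution on $\mathcal{S}_i$ and $\mathcal{U}_i$ via the spectral radius bounds of Lemma~\ref{figo}, and the same appeal to (\ref{risontriang}) to keep the $T$'s triangular. The only cosmetic difference is in the resonant step, where you recover $B_n$ by inverting $H\mapsto\pi_r(H\circ A^{-1})$ on $\mathcal{R}_i$ and invoke $\varepsilon$-smallness for its invertibility, whereas the paper simply takes $R_{n,n+1}=\pi_r(P_{n,n+1})$; these choices coincide and the smallness of $\varepsilon$ is superfluous, because in optimal form the nonzero sub-diagonal entries of $A^{-1}$ only connect indices with equal $|\lambda|$, so each of $\mathcal{R}_i,\mathcal{S}_i,\mathcal{U}_i$ is exactly invariant under $H\mapsto H\circ A^{-1}$ and $\pi_r$ commutes with it.
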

\begin{proof}
For $i=2$ set $T^2_{n+1,n}=A,\ k^2_{n}=\id,$ and we are done since $A$ is a triangular mapping.
Now assume that (\ref{rudin}) holds for $i\geq 2$. We can rewrite (\ref{rudin}) as 
\begin{equation}\label{rudin2}
 k^i_{n+1}\circ \varphi_{n,n+1}-T^i_{n,n+1}\circ k^i_n=P_{n,n+1}+O(|z|^{i+1}),
\end{equation}
where $(P_{n,n+1})$ is a bounded sequence in $\mathcal{H}_i.$
Define $R_{n,n+1}\doteq\pi_r(P_{n,n+1})$ which is in the real resonant subspace $\mathcal{R}_i$, and  $N_{n,n+1}\doteq P_{n,n+1}-R_{n,n+1}\in\mathcal{S}_i\oplus \mathcal{U}_i $.  Set $$T^{i+1}_{n,n+1}\doteq T^{i}_{n,n+1}+R_{n,n+1},$$ which is still a triangular dilation $\mathbb{N}$-evolution family with uniformly bounded degree and uniformly bounded coefficients since $R_{n,n+1}$ is a triangular mapping  thanks to (\ref{risontriang}), and set $$ k_n^{i+1}\doteq k_n^i+H_n\circ k_n^i,$$ where $(H_n)$ is an unknown bounded sequence in $\mathcal{H}_i$.  
\begin{align} k^{i+1}_{n+1}\circ \varphi_{n,n+1}&-T^{i+1}_{n,n+1}\circ k^{i+1}_n=\nonumber\\
&=(k_{n+1}^i+H_{n+1}\circ k_{n+1}^i)\circ \v_{n,n+1}-(T^{i}_{n,n+1}+R_{n,n+1})\circ(k_n^i+H_n\circ k_n^i)\nonumber \\
&=P_{n,n+1}-R_{n,n+1}+H_{n+1}\circ A -A \circ H_n +O(|z|^{i+1}) \nonumber\\
&=N_{n,n+1}+H_{n+1}\circ A -A \circ H_n +O(|z|^{i+1}).\nonumber
\end{align}
Thus to end the proof we need to prove the existence of a  bounded sequence $(H_n)$ of elements of  $\mathcal{H}_i$ which satisfies
\begin{equation}\label{nuova} 
N_{n,n+1}=A\circ H_n-H_{n+1}\circ A,
\end{equation}
that is  a bounded  solution $(H_n)$ of the  {\sl homological difference equation} $$H_{n+1}=A\circ H_n\circ A^{-1}-N_{n,n+1}\circ A^{-1}.$$ 

Define  $B_n\doteq -N_{n,n+1}\circ A^{-1}$. In the proof of Lemma \ref{figo} we proved that  $\mathcal{S}_i$ and  $\mathcal{U}_i$ are invariant by the linear operator $H\mapsto H\circ A^{-1}$, thus $B_n\in \mathcal{S}_i\oplus \mathcal{U}_i$. Define  $B_n^s\doteq \pi_s(B_n),\ B_n^u\doteq \pi_u(B_n).$
If $n\geq 1$ it is easy to prove by induction that
\begin{equation}\label{formulona}
H_n=\Gamma^n (H_0)+\sum_{j=0}^{n-1} \Gamma^j(  B_{n-1-j})=\Gamma^n (H_0)+\sum_{j=0}^{n-1} \Gamma^{n-1-j}(  B_j).
\end{equation}

We have
\begin{align}
H_n&=\Gamma^n (H_0)+\sum_{j=0}^{n-1}\Gamma^j(B^s_{n-1-j})+\sum_{j=0}^{n-1} \Gamma^j( B^u_{n-1-j})\nonumber\\
&=\sum_{j=0}^{n-1}\Gamma^j(B^s_{n-1-j})+ \Gamma^{n-1}(\Gamma(H_0)+\sum_{j=0}^{n-1}\Gamma^{-j}(B_j^u)).\nonumber
\end{align}

Recall that if $V$ is a complex vector space, and  $L\in \mathcal{L}(V)$, then the spectral radius of $L$ satisfies $\rho(L)=\inf_{ \|\cdot\|\in\mathcal{I}}\{\|L\|\},$ where $\mathcal{I}$ is the set of all operator norms induced by a norm on $V$. Hence by Lemma \ref{figo}  there exist a norm $\|\cdot\|_s$ on $\mathcal{S}_i$ and a norm $\|\cdot\|_u$ on $\mathcal{U}_i$ such that $\|\Gamma|_{\mathcal{S}_i}\|_s<1, \|\Gamma^{-1}|_{\mathcal{U}_i}\|_u<1.$ Define a norm on $\mathcal{S}_i\oplus \mathcal{U}_i$ by $$\|H\|\doteq\|\pi_s(H)\|_s+\ \|\pi_u(H)\|_u.$$

Since $(B_n^s)$ is bounded there exists $C>0$ such that $$\|\sum_{j=0}^{n-1}\Gamma^j(B^s_{n-1-j})\|\leq\sum_{j=0}^{\infty}\|\Gamma^j(B^s_{n-1-j})\|_s\leq C,\quad n\geq 0.$$ 

Since $(B_n^u)$ is bounded, $\sum_{j=0}^{\infty}\| \Gamma^{-j}( B_{j}^u)\|_u<+\infty,$ thus we can define $$H_0\doteq-\Gamma^{-1}(\sum_{j=0}^{\infty}\Gamma^{-j}( B_{j}^u))\in\mathcal{U}_i.$$ With this definition,
$$\|H_n\| \leq C+\|\Gamma^{n-1}(\sum_{j=n}^{\infty}\Gamma^{-j}( B_{j}^u))\|_u
=C+\|\sum_{j=1}^{\infty}\Gamma^{-j}( B_{n-1+j}^u)\|_u,$$
 and since $$\|\sum_{j=1}^{\infty}\Gamma^{-j}( B_{n-1+j}^u)\|_u\leq \sum_{j=1}^{\infty}\|\Gamma^{-j}( B_{n-1+j}^u)\|_u\leq C',$$ we have $\|H_n\|\leq C+C'$.

If no multiplicative real resonance occurs among the eigenvalues of $A$, then  $$\mathcal{R}_i=\varnothing,\quad \mbox{for all}\ i\geq 2,$$ and thus  $T^{i+1}_{n,m}=T^{i}_{n,m}$ for all $ i\geq 2$, which gives $$T^{i}_{n,m}=T^{2}_{n,m}=A^{m-n},\quad \mbox{for all}\  i\geq 2.$$

\end{proof}
\begin{remark}
Let $p\geq 0$ be the smallest integer such that $|\lambda_1^p|<|\lambda_q|.$ Then if $i\geq p$  we have $\pi_r(P_{n,n+1})=0$ in $\mathcal{H}_i$. Hence $T^i_{n,n+1}=T^p_{n,n+1}$ for any $i\geq p$.
\end{remark}
\begin{proposition}\label{discreteresult}
Let $D\subset \C^q$ be a complete hyperbolic domain.
Let $(\v_{n,m},\colon D\to D)$ be a dilation  $\mathbb{N}$-evolution family such that $\v_{n,n+1}(z)=A(z)+O(|z|^2)$ with $A$ in optimal form.
Then there exists a triangular dilation $\mathbb{N}$-evolution family $(T_{n,m})$ with bounded degree and bounded coefficients locally conjugate to $(\v_{n,m})$. If no multiplicative real resonance occurs among the eigenvalues of $A$, then $(\v_{n,m})$ is locally conjugate to its linear part $(A^{m-n})$.
\end{proposition}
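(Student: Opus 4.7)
The plan is to upgrade the formal near-conjugacy supplied by Proposition \ref{core} to an actual local conjugacy via a telescoping limit, in the spirit of the argument in Proposition \ref{inter}. Let $p$ be the integer in the preceding Remark, so that $T^i_{n,m}=T^p_{n,m}$ for every $i\geq p$; denote this stabilized family by $(T_{n,m})$. It is triangular, is a dilation $\N$-evolution family, has $\deg T_{n,n+1}\leq p-1$, and has uniformly bounded coefficients. I would then fix an integer $i\geq p$ so large that $\|A\|^i\cdot\|A^{-1}\|<1$ (possible because $A$ is in optimal form, so $\|A\|<1$) and apply Proposition \ref{core} for this $i$ to obtain polynomials $(k_n^i)$ of uniformly bounded degree and coefficients, with $k_n^i(z)=z+O(|z|^2)$, satisfying
\begin{equation*}
k_{n+1}^i\circ\varphi_{n,n+1}(z)-T_{n,n+1}\circ k_n^i(z)=R_n(z),\qquad R_n(z)=O(|z|^i).
\end{equation*}

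Next I would choose $r>0$ small enough that $r\B\subset D$, that each $\varphi_{n,n+1}$ maps $r\B$ into itself with contraction factor at most some $\mu$ with $\|A\|<\mu<1$ (so $|\varphi_{n,m}(z)|\leq\mu^{m-n}|z|$ on $r\B$), and that the forthcoming estimates hold. The candidate local conjugating mapping is then
\begin{equation*}
h_n(z):=\lim_{m\to\infty}T_{n,m}^{-1}\circ k_m^i\circ\varphi_{n,m}(z),\qquad z\in r\B.
\end{equation*}
A short computation identifies the difference of consecutive terms as $T_{n,m+1}^{-1}\circ R_m\circ\varphi_{n,m}$, so the sequence telescopes to
\begin{equation*}
h_n=k_n^i+\sum_{m=n}^{\infty}T_{n,m+1}^{-1}\circ R_m\circ\varphi_{n,m}.
\end{equation*}

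The key step is to prove this series converges uniformly on $r\B$. Since $|R_m\circ\varphi_{n,m}(z)|\leq C_1\mu^{i(m-n)}|z|^i$, the argument fed to $T_{n,m+1}^{-1}$ is extremely small for $m$ large; consequently it stays well inside the region on which $T_{n,m+1}^{-1}$ is controlled by its linear part $A^{-(m+1-n)}$, yielding $|T_{n,m+1}^{-1}\circ R_m\circ\varphi_{n,m}(z)|\leq C_2\|A^{-1}\|^{m+1-n}\mu^{i(m-n)}|z|^i$. The choice of $i$ makes $\mu^i\|A^{-1}\|<1$, and the series is majorized by a convergent geometric series uniform in $z\in r\B$. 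Once this is in hand, the verifications are routine: $h_n$ is a uniformly bounded holomorphic family with $h_n(z)=z+O(|z|^2)$ (the tail of the series is $O(|z|^i)$, which is absorbed into $O(|z|^2)$), and the conjugacy identity $h_m\circ\varphi_{n,m}=T_{n,m}\circ h_n$ follows by composing the defining limit with $\varphi_{n,m}$ and pulling $T_{n,m}$ out via the cocycle relation $T_{m,j}^{-1}=T_{n,m}\circ T_{n,j}^{-1}$. If no real multiplicative resonance occurs among the eigenvalues of $A$, the last statement of Proposition \ref{core} gives $T_{n,m}^i=A^{m-n}$ for all $i\geq 2$, so the construction produces a local conjugacy between $(\varphi_{n,m})$ and its linear part.

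The main obstacle is the telescoping estimate: under composition, the degree and coefficients of $T_{n,m}$ may blow up, so one must control $T_{n,m+1}^{-1}$ on arguments that are simultaneously small enough for its linearization to dominate and yet scale correctly against $\|A^{-1}\|^{m+1-n}$. The mechanism that makes this balance work is the interplay between the optimal form of $A$ (providing $\|A\|<1$ and a genuine contraction of $\varphi_{n,m}$ on a small ball), the stabilization of $T^i_{n,m}$ for $i\geq p$ (so the triangular factor has bounded one-step data), and the freedom to take $i$ arbitrarily large in Proposition \ref{core}, which drives the error $R_m$ to vanish faster than any prescribed rate.
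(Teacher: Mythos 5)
Your overall strategy matches the paper's: stabilize $(T^i_{n,m})$ to $(T_{n,m})=(T^p_{n,m})$, then define $h_n=\lim_{m\to\infty}T_{n,m}^{-1}\circ k_m\circ\varphi_{n,m}$ on a small ball and verify the cocycle identity. However, there is a genuine gap in the convergence estimate, and it is exactly the point where the paper invokes Lemma \ref{peters}, which you never use.

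First, the telescoping formula
\[
h_n=k_n^i+\sum_{m=n}^{\infty}T_{n,m+1}^{-1}\circ R_m\circ\varphi_{n,m}
\]
is not correct as stated: the difference of consecutive terms is
\[
T_{n,m+1}^{-1}\bigl(k_{m+1}^i\circ\varphi_{n,m+1}(z)\bigr)-T_{n,m+1}^{-1}\bigl(T_{m,m+1}\circ k_m^i\circ\varphi_{n,m}(z)\bigr),
\]
and $T_{n,m+1}^{-1}$ is a triangular \emph{polynomial} automorphism, not a linear one, so you cannot pull it out across the difference. You must instead apply a Lipschitz estimate to $T_{n,m+1}^{-1}$. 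Second, and more seriously, your claimed bound $|T_{n,m+1}^{-1}\circ R_m\circ\varphi_{n,m}(z)|\leq C_2\|A^{-1}\|^{m+1-n}\mu^{i(m-n)}|z|^i$ rests on the assertion that near the origin $T_{n,m+1}^{-1}$ is controlled by its linear part $A^{-(m+1-n)}$. This requires knowing how fast the degree and coefficients of $T_{n,m+1}^{-1}$ grow with $m-n$; under composition the degree of $T_{0,m}$ can grow like $(p-1)^m$, so the neighborhood of $0$ on which the linear part dominates shrinks at an uncontrolled rate, and the effective Lipschitz constant of $T_{0,m}^{-1}$ near a given radius can greatly exceed $\|A^{-1}\|^{m}$. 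You give no quantitative control that rules this out. The paper's mechanism is different and self-contained: Lemma \ref{peters}(a) supplies a uniform Lipschitz constant $\beta^k$ for $T_{0,k}^{-1}$ on the fixed macroscopic set $\tfrac12\Delta$ (with $\beta$ possibly much larger than $\|A^{-1}\|$), and the exponent $\ell$ in Proposition \ref{core} is then chosen so that $\alpha^\ell\beta<1$, balancing the contraction $\alpha^{\ell m}$ of $\varphi_{0,m}$ against the expansion $\beta^m$ of $T_{0,m}^{-1}$. Your criterion $\|A\|^i\|A^{-1}\|<1$ is not the right threshold. To repair the argument you would either need to prove a quantitative version of Lemma \ref{peters} with $\beta$ replaced by something close to $\|A^{-1}\|$ on suitable shrinking neighborhoods (nontrivial, because of the degree and coefficient growth of the triangular iterates), or simply invoke Lemma \ref{peters} as the paper does and pick $\ell$ against $\beta$ rather than against $\|A^{-1}\|$.
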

\begin{proof}
Let $\alpha$ be such that  $ \max_{z\in \mathbb{C}^q}\frac{|A(z)|}{|z|}<\alpha<1.$
Let $(T_{n,m}^i)$ and $(k_n^i)$ be the families given by Proposition \ref{core}.
Let $p\geq 0$ be as in previous remark. Define $(T_{n,m})\doteq(T_{n,m}^p)$.
Let $\beta>0$ be the constant given by Lemma \ref{peters} for $(T_{n,m})$. Let $\ell\geq 0$ be an integer such that  $\alpha^\ell<1/\beta,$ and define $(k_n)\doteq (k_n^\ell).$
By Proposition \ref{core},  $$k_{n+1}\circ \varphi_{n,n+1}-T_{n,n+1}\circ k_n=O(|z|^\ell),$$ thus $$T_{n,n+1}^{-1}\circ k_{n+1}\circ \varphi_{n,n+1}- k_n=O(|z|^\ell).$$ 

By Lemma \ref{estimate}   there exists $r>0$ (we can assume $0<r<1/2$) such that on $r\mathbb{B}$ we have $|\varphi_{n,n+1}(z)|\leq \al |z|$ and $|T_{n,n+1}(z)|\leq \al |z|$ for all $n\geq 0$. Thus for $\zeta\in r\mathbb{B}$ we have $$|\varphi_{0,m}(\zeta)|<r\al^{m}.$$ Thanks to Lemma \ref{taylor}  there exists $C>0$ such that on $r\mathbb{B}$, 
$$|T_{m,m+1}^{-1} \circ k_{m+1}\circ \varphi_{m,m+1}(\zeta)- k_m(\zeta)|\leq C|\zeta|^\ell,\quad m\geq 0.$$ Hence
$$|T_{m,m+1}^{-1}\circ k_{m+1}\circ\v_{0,m+1}(\zeta)-k_m\circ\v_{0,m}(\zeta)|\leq C|\v_{0,m}(\zeta)|^\ell\leq C r^\ell\al^{\ell m}.$$

Let $\Delta$ be the unit polydisc. There exists $s\mathbb{B}\subset r\mathbb{B}$ such that $$T_{m,m+1}^{-1}\circ k_{m+1}\circ\v_{0,m+1}(s\mathbb{B})\subset \frac{1}{2}\Delta$$ and $$k_m\circ\v_{0,m}(s\mathbb{B})\subset \frac{1}{2}\Delta.$$ Indeed the families $(k_m)$ and $(T_{m,m+1}^{-1})$ are uniformly bounded  on $r\mathbb{B}$ and thus equicontinuous in $0$.

 Hence  Lemma \ref{peters}  applies to get on $s\mathbb{B}$,
$$|T_{0,m+1}^{-1}\circ k_{m+1}\circ\v_{0,m+1}(\zeta)-T_{0,m}^{-1}\circ k_m\circ\v_{0,m}(\zeta)|\leq C r^\ell(\beta\al^\ell)^m.$$
Likewise it is easy to see that for all $m\geq n\geq 0$,
$$|T_{n,m+1}^{-1}\circ k_{m+1}\circ\v_{n,m+1}(\zeta)-T_{n,m}^{-1}\circ k_m\circ\v_{n,m}(\zeta)|\leq C r^\ell(\beta\al^\ell)^{m-n}.$$

Since $\alpha^\ell<1/\beta$ for all $n \geq 0$ there exists a holomorphic mapping $h_n$ on $s\mathbb{B}$ such that $$h_n=\lim_{m\to\infty} T_{n,m}^{-1}\circ k_m\circ \varphi_{n,m}$$  uniformly on compacta.  Each $h_n$ is bounded by $|k_n|+\sum_{j=0}^\infty C r^\ell(\beta\al^\ell)^j,$ hence they are uniformly bounded. Moreover $$h_m\circ\varphi_{n,m}=\lim_{j\rightarrow \infty} T_{m,j}^{-1}\circ k_j\circ\varphi_{m,j}\circ\varphi_{n,m}=\lim_{j\rightarrow \infty}  T_{n,m}\circ T_{j,n}\circ k_j\circ\varphi_{n,j}= T_{n,m}\circ h_n.$$

If no  multiplicative real resonance occurs among the eigenvalues of $A$, then $(T_{n,m})=(A^{m-n})$.
\end{proof}

We can now prove an existence result for $\mathbb{N}$-Loewner chains.
\begin{proposition}\label{oddio}
Let $D\subset \C^q$ be a complete hyperbolic domain.
Let $(\v_{n,m}\colon D\to D)$ be a dilation $\mathbb{N}$-evolution family, $\v_{n,n+1}(z)=A(z)+O(|z|^2)$. Then there exists a $\mathbb{N}$-Loewner chain $(f_n\colon D\to \mathbb{C}^q)$ with $\cup_{n\geq 0}f_n(D)=\C^q$ associated with $(\v_{n,m})$, which is locally bounded if no multiplicative real resonance occurs among the eigenvalues of $A$.
\end{proposition}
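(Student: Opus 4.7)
The plan is to build $(f_n)$ explicitly from the local conjugacy produced in Section \ref{spectralsection} and then to verify the two geometric conditions separately.

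\textbf{Construction.} After a linear change of coordinates we may assume $A$ is in optimal form. Proposition \ref{discreteresult} provides a triangular dilation $\mathbb{N}$-evolution family $(T_{n,m})$ with uniformly bounded degree and coefficients, and a local conjugacy realized by mappings $h_n(z)=z+O(|z|^2)$ uniformly bounded on some ball $r\mathbb{B}\subset D$, with $h_m\circ\v_{n,m}=T_{n,m}\circ h_n$ on $r\mathbb{B}$. By Proposition \ref{inter} the $h_n$ extend to mappings $h_n\colon D\to\C^q$ satisfying the same intertwining relation on all of $D$, and by Proposition \ref{interuniv} each extended $h_n$ is univalent. Since compositions of triangular automorphisms are triangular automorphisms of $\C^q$, each $T_{0,n}$ is a global automorphism of $\C^q$. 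I then set
\[
f_n\doteq T_{0,n}^{-1}\circ h_n\colon D\to \C^q.
\]
Each $f_n$ is univalent, and using the evolution family identity $T_{0,m}=T_{n,m}\circ T_{0,n}$ together with the intertwining relation one checks
\[
f_m\circ \v_{n,m}=T_{0,m}^{-1}\circ T_{n,m}\circ h_n = T_{0,n}^{-1}\circ h_n=f_n,\qquad 0\leq n\leq m,
\]
so $(f_n)$ is a $\mathbb{N}$-Loewner chain associated with $(\v_{n,m})$.

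\textbf{Covering $\C^q$.} I need $\cup_{n\geq 0} f_n(D)=\C^q$, equivalently, given $w\in\C^q$, I need $T_{0,n}(w)\in h_n(D)$ for some $n$. Two uniform estimates suffice. First, since the family $(h_n|_{r\mathbb{B}})$ is uniformly bounded and each $h_n$ has the normalization $h_n(z)=z+O(|z|^2)$, Cauchy estimates give a uniform bound on the quadratic (and higher) Taylor terms, so a quantitative inverse function argument produces $\delta>0$, independent of $n$, with $\delta\mathbb{B}\subset h_n(r\mathbb{B})\subset h_n(D)$ for every $n$. Second, I claim $T_{0,n}(w)\to 0$ as $n\to\infty$ for every $w\in\C^q$. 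Writing $T_{n,n+1}$ in triangular form with diagonal part $(\lambda_1,\ldots,\lambda_q)$ and uniformly bounded polynomial corrections of uniformly bounded degree, the first coordinate evolves linearly and decays geometrically; then an induction on the triangular coordinates, using $|\lambda_i|<1$ and the fact that each $t^{(i)}_{n,n+1}$ depends only on coordinates that already decay to zero, shows that all coordinates of $T_{0,n}(w)$ tend to zero, uniformly on compacta. Combining the two estimates, for $n$ large $T_{0,n}(w)\in\delta\mathbb{B}\subset h_n(D)$, so $w\in f_n(D)$.

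\textbf{Local boundedness without real resonances.} If no multiplicative real resonance occurs among the eigenvalues of $A$, the final assertion of Proposition \ref{discreteresult} gives $T_{n,m}=A^{m-n}$, so $f_n=A^{-n}\circ h_n$ and $A^n\circ f_n=h_n$. Since the $h_n$ are uniformly bounded on $r\mathbb{B}$ with $h_n(z)=z+O(|z|^2)$, this is precisely the statement that $(f_n)$ is a locally bounded $\mathbb{N}$-Loewner chain in the sense of Section \ref{locconj}.

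\textbf{Main obstacle.} The serious technical point is the non-autonomous convergence $T_{0,n}(w)\to 0$ for arbitrary $w\in\C^q$: the mappings $T_{n,n+1}$ are only uniformly contractive on bounded neighborhoods of the origin, not globally on $\C^q$. The triangular structure (preserved from Proposition \ref{core} thanks to the resonance identity (\ref{risontriang})) together with the uniform degree and coefficient bounds is exactly what makes the coordinate-by-coordinate induction go through; without these uniformity properties one could not conclude that a fixed compact set is eventually pushed into $\delta\mathbb{B}$.
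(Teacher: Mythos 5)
Your proof is correct and follows the same construction as the paper's: after putting $A$ in optimal form you invoke Proposition \ref{discreteresult}, form $f_n=T_{0,n}^{-1}\circ h_n$ with the extended intertwining mappings from Propositions \ref{inter}--\ref{interuniv}, verify the functional equation, and reduce the no-resonance case to $T_{n,m}=A^{m-n}$. The only difference is cosmetic: where the paper cites the appendix Lemma \ref{koebe} for the uniform ball $s\mathbb{B}\subset\bigcap_n h_n(D)$ and Lemma \ref{peters}(b) for $\bigcup_n T_{0,n}^{-1}(s\mathbb{B})=\C^q$, you re-derive both facts inline (Cauchy estimates plus the coordinate-by-coordinate induction on the triangular structure, which is exactly the argument behind \cite[Corollary 4.4]{Arosio} invoked in Lemma \ref{peters}).
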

\begin{proof}
Up to a linear change of coordinates, we can assume that $A$ is in optimal form. By Proposition \ref{discreteresult}  there exists a triangular dilation $\mathbb{N}$-evolution family $(T_{n,m})$ with bounded degree and bounded coefficients locally conjugate to $(\v_{n,m})$. Let $(h_n)$ be the  family of intertwining mappings  given by Propostion \ref{inter}. Then $(T_{0,n}^{-1}\circ h_n)$ is a $\mathbb{N}$-Loewner chain $(f_n)$  associated with $(\v_{n,m})$. Indeed  $$T_{0,m}^{-1}\circ h_m\circ \v_{n,m}(z)= T_{0,m}^{-1}\circ T_{n,m}\circ h_n(z)=T_{0,n}^{-1}\circ h_n(z),\quad z\in D,0\leq n\leq m.$$ 

By Lemma \ref{koebe} there exists a ball $s\mathbb{B}\subset\bigcap_{n\geq 0} h_n(D)$. Hence $$\bigcup_{n\geq 0}T_{0,n}^{-1}(h_n(D))\supset \bigcup_{n\geq 0}T_{0,n}^{-1}(s\mathbb{B})=\mathbb{C}^q$$ by Lemma \ref{peters}.

If no  multiplicative real resonance occurs among the eigenvalues of $A$, then $(T_{n,m})=(A^{m-n})$, and the chain $(A^{-n}\circ h_n)$ is  locally bounded.
\end{proof}

Now we can go back to the continuous-time setting.
\begin{definition}
A $\mathbb{R}^+$-evolution family $(\v_{s,t})$ on $D$   is a {\sl dilation $\mathbb{R}^+$-evolution family} if for all $ 0\leq s\leq t$,
\begin{equation}\label{continuous dilation type}
 \varphi_{s,t}(z)=e^{\Lambda(t-s)}(z)+O(|z|^2),
\end{equation}
where the eigenvalues of $\Lambda\in \mathcal{L}(\mathbb{C}^q)$ have strictly negative real part.

A $\mathbb{R}^+$-Loewner chain $(f_t\colon D\to \mathbb{C}^q)$ is a {\sl locally bounded $\mathbb{R}^+$-Loewner chain} if for all $t\geq 0$, $$f_t(z)=e^{-\Lambda t}(z)+O(|z|^2),$$ where the eigenvalues of $\Lambda\in \mathcal{L}(\mathbb{C}^q)$ have strictly negative real part and  the family $(e^{\Lambda t}\circ f_t)$ is  uniformly bounded in a neighborhood of the origin.

If we restrict time to integer values in a dilation $\mathbb{R}^+$-evolution family $(\varphi_{s,t})$  we obtain its {\sl discretized} dilation $\mathbb{N}$-evolution family $(\varphi_{n,m})$. We have $$\v_{n,n+1}(z)=e^{\Lambda }(z)+O(|z|^2).$$ 

An {\sl additive real resonance} is an identity $$\Re(\sum_{j=1}^N k_j\alpha_j)=\Re\alpha_l,$$ where $k_j\geq0$ and $\sum_j k_j\geq 2$. 
Recall that  $\alpha $ is an eigenvalue of $\Lambda$ with algebraic molteplicity $m$ if and only if $e^\alpha$ is an eigenvalue of $e^\Lambda$ with algebraic molteplicity $m$. 
Hence additive real resonances of $\Lambda$ correspond to multiplicative real resonances of $e^\Lambda.$
\end{definition}
\begin{lemma}\label{discretetocontinuous}
Let $D$ be a complete hyperbolic domain.
Let $(\varphi_{s,t}\colon D\to D)$ be a dilation $\mathbb{R}^+$-evolution family, and let $(\varphi_{n,m}\colon D\to D)$ be its discretized evolution family. Assume there exists a  $\mathbb{N}$-Loewner chain $(f_n)$ associated with  $(\varphi_{n,m})$. Then we can extend it in a unique way to a  $\mathbb{R}^+$-Loewner chain associated with $(\varphi_{s,t})$. If $(f_n)$ is a locally bounded $\mathbb{N}$-Loewner chain, then also $(f_s)$ is locally bounded.
\end{lemma}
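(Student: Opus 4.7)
The plan is to define, for each $s \geq 0$,
\[ f_s := f_n \circ \varphi_{s,n}, \]
where $n \in \mathbb{N}$ is any integer with $n \geq s$. First I would verify well-definedness: if $m \geq n \geq s$ are both integers, the semigroup property $\varphi_{s,m} = \varphi_{n,m} \circ \varphi_{s,n}$ combined with the discrete functional equation $f_m \circ \varphi_{n,m} = f_n$ yields
\[ f_m \circ \varphi_{s,m} = f_m \circ \varphi_{n,m} \circ \varphi_{s,n} = f_n \circ \varphi_{s,n}, \]
so the definition is independent of the choice of $n \geq s$. When $s$ is itself an integer, taking $n=s$ recovers the original $f_s$.

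Next I would verify the three defining properties of an $\mathbb{R}^+$-Loewner chain associated with $(\varphi_{s,t})$. Univalence of $f_s$ is immediate, as it is a composition of two univalent maps. The functional equation $f_t \circ \varphi_{s,t} = f_s$ for $0 \leq s \leq t$ follows by fixing an integer $n \geq t$ and computing
\[ f_t \circ \varphi_{s,t} = f_n \circ \varphi_{t,n} \circ \varphi_{s,t} = f_n \circ \varphi_{s,n} = f_s. \]
The subordination property $f_s(D) \subset f_t(D)$ then follows at once. Uniqueness is also immediate: any extension $(g_s)$ associated with $(\varphi_{s,t})$ satisfies $g_s = g_n \circ \varphi_{s,n} = f_n \circ \varphi_{s,n} = f_s$ by its own functional equation.

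For the local boundedness statement, suppose $(f_n)$ is locally bounded: $f_n(z) = e^{-\Lambda n}(z) + O(|z|^2)$ and $(e^{\Lambda n} \circ f_n)$ is uniformly bounded on some neighborhood $U$ of the origin. Choosing $n := \lceil s \rceil$ so that $n - s \in [0,1]$, I would rewrite
\[ e^{\Lambda s} \circ f_s = e^{-\Lambda(n-s)} \circ (e^{\Lambda n} \circ f_n) \circ \varphi_{s,n}. \]
The matrix $e^{-\Lambda(n-s)}$ has operator norm bounded uniformly in $s$ since $n-s$ lies in the compact interval $[0,1]$, and composing the Taylor expansions of $f_n$ and $\varphi_{s,n}(z) = e^{\Lambda(n-s)}(z) + O(|z|^2)$ yields $f_s(z) = e^{-\Lambda s}(z) + O(|z|^2)$, as required.

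The step I expect to require the most care is producing a single neighborhood $V$ of the origin, independent of $s$, such that $\varphi_{s,n}(V) \subset U$ for every $s \geq 0$ with $n = \lceil s \rceil$. This amounts to a Schwarz-type estimate in the spirit of Lemma \ref{estimate}, using that each $\varphi_{s,n}$ fixes the origin with linear part $e^{\Lambda(n-s)}$ of uniformly bounded norm, and that the $L^d_{\rm loc}$ bound on the Herglotz vector field $H$ yields uniform bounds of $\varphi_{s,n}$ on compacta over the bounded time interval $[s,n] \subset [n-1,n]$. Once this uniform estimate is in hand, the displayed composition is uniformly bounded on $V$, which completes the proof.
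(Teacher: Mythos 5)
Your extension, well-definedness, functional equation, and uniqueness arguments all match the paper's route (the paper simply defines $f_s = f_j \circ \varphi_{s,j}$ and cites \cite[Lemma 8.5]{Arosio} for these verifications). However, there is a genuine gap in your treatment of local boundedness. You correctly identify the crux --- producing a neighborhood $V$ of the origin, independent of $s$, with $\varphi_{s,n}(V) \subset U$ for $n = \lceil s \rceil$ --- but your sketch of how to close it does not work. You appeal to an $L^d_{\mathrm{loc}}$ bound on a Herglotz vector field, yet Lemma \ref{discretetocontinuous} is stated purely for a dilation $\mathbb{R}^+$-evolution family; no generating vector field is part of the hypotheses. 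Even granting one, the $L^d_{\mathrm{loc}}$ condition only controls $\int_{n-1}^{n} c_K(\xi)\,d\xi$ for each fixed $n$, with no uniformity in $n$, so the Schwarz-type estimate you envision would not yield a single $V$ valid for all $s \geq 0$.

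The paper disposes of this step with a one-line Kobayashi-distance observation that your proposal misses. Since $D$ is complete hyperbolic, the neighborhood on which $(e^{\Lambda n} \circ f_n)$ is uniformly bounded may be taken, after shrinking, to be a Kobayashi ball $\Omega(0,r)$. Each $\varphi_{s,n} \colon D \to D$ is holomorphic and fixes the origin, hence decreases the Kobayashi distance, so $\varphi_{s,n}(\Omega(0,r)) \subset \Omega(0,r)$ automatically, for every $s$ and every $n$, with no estimate required. Combined with the uniform bound on $\|e^{-\Lambda(n-s)}\|$ for $n - s \in [0,1]$ (which you correctly note), the identity
\begin{equation*}
e^{\Lambda s}\circ f_s = e^{-\Lambda(n-s)}\circ \bigl(e^{\Lambda n}\circ f_n\bigr)\circ \varphi_{s,n}
\end{equation*}
is then uniformly bounded on $\Omega(0,r)$, and the proof closes. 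You should replace the sketched Schwarz-type argument with this Kobayashi-ball invariance.
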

\begin{proof}
For all $s\in\mathbb{R}^+$ define $f_s=f_j\circ \varphi_{s,j},$ where $j$ is an integer such that $s\leq j$. The family $(f_s)$ is a  $\mathbb{R}^+$-Loewner chain associated with $(\varphi_{s,t})$ (cf. \cite[Lemma 8.5]{Arosio}). 
 If $(f_n)$ is a locally bounded $\mathbb{N}$-Loewner chain, then there exists  $r>0$  such that  the family  $(e^{\Lambda n}\circ f_n)$  is uniformly bounded on the Kobayashi ball  $\Omega(0,r)$  centered in the origin of radius $r>0$. 
For each $s\geq 0$ define $m_s$ as the smallest integer greater than $s$. One has $$ e^{\Lambda s}\circ f_s=e^{\Lambda s}\circ f_{m_s}\circ\v_{s,m_s}= e^{\Lambda (s-m_s)}\circ e^{\Lambda m_s}\circ f_{m_s} \circ\v_{s,m_s},$$
and since $\v_{s,m_s}(\Omega(0,r))\subset\Omega(0,r)$ and  $m_s-s\leq 1$,  the family $ (e^{\Lambda s}\circ f_s)$   is uniformly bounded on  $\Omega(0,r)$.
\end{proof}

\begin{proposition}\label{continuousresult}
Let $D\subset \C^q$ be a complete hyperbolic domain.
Let $(\varphi_{s,t}\colon D\to D)$ be a dilation $\mathbb{R}^+$-evolution family, $\v_{s,t}(z)=e^{\Lambda(t-s)}(z)=O(|z|^2)$. Then there exists a  $\mathbb{R}^+$-Loewner chain $(f_s\colon D\to
\mathbb{C}^q)$  with $\cup_{t\geq 0}f_t(D)=\C^q$ associated with $(\varphi_{s,t})$, which is locally bounded if no additive real resonance occurs among the eigenvalues of $\Lambda$.
\end{proposition}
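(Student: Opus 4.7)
The plan is to reduce this continuous-time statement to the discrete-time result in Proposition \ref{oddio} via the discretization Lemma \ref{discretetocontinuous}, so essentially no new analysis is needed: the content lies entirely in matching hypotheses.

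First I would form the discretized $\mathbb{N}$-evolution family $(\v_{n,m})$ obtained by restricting $(\v_{s,t})$ to integer times. By hypothesis $\v_{n,n+1}(z) = e^{\Lambda}(z) + O(|z|^2)$. Setting $A \doteq e^{\Lambda}$, the eigenvalues of $A$ are $e^{\alpha_j}$, where $\alpha_j$ are the eigenvalues of $\Lambda$, so $|e^{\alpha_j}| = e^{\Re \alpha_j} < 1$ by the dilation assumption. In particular $A \in \mathcal{A}(\mathbb{C}^q)$ and $\rho(A) < 1$, so $(\v_{n,m})$ is a dilation $\mathbb{N}$-evolution family in the sense of the definition preceding Lemma \ref{nuovo}.

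Next I would invoke Proposition \ref{oddio} on $(\v_{n,m})$ to obtain an associated $\mathbb{N}$-Loewner chain $(f_n\colon D \to \mathbb{C}^q)$ with $\bigcup_{n\geq 0} f_n(D) = \mathbb{C}^q$. Then Lemma \ref{discretetocontinuous} produces the unique extension to a $\mathbb{R}^+$-Loewner chain $(f_s\colon D \to \mathbb{C}^q)$ associated with $(\v_{s,t})$. The filtration condition persists upon extension:
\begin{equation*}
\bigcup_{t\geq 0} f_t(D) \supset \bigcup_{n\geq 0} f_n(D) = \mathbb{C}^q,
\end{equation*}
so equality holds.

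For the local boundedness part, I would appeal to the correspondence between additive and multiplicative resonances already recorded in the excerpt: $\Re(\sum_j k_j \alpha_j) = \Re \alpha_l$ if and only if $|e^{\sum_j k_j \alpha_j}| = |e^{\alpha_l}|$, i.e.\ $|\lambda_1^{k_1}\cdots\lambda_q^{k_q}| = |\lambda_l|$ with $\lambda_j = e^{\alpha_j}$. Hence the absence of additive real resonances among the eigenvalues of $\Lambda$ is equivalent to the absence of multiplicative real resonances among the eigenvalues of $A = e^{\Lambda}$. Under this hypothesis Proposition \ref{oddio} delivers a \emph{locally bounded} $\mathbb{N}$-Loewner chain $(f_n)$, and the second half of Lemma \ref{discretetocontinuous} then guarantees that the extended family $(f_s)$ is locally bounded as well, i.e.\ $(e^{\Lambda s}\circ f_s)$ is uniformly bounded in a neighborhood of the origin. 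There is no real obstacle here; the only point requiring a word of care is the resonance translation, which the paper has already spelled out. The proof therefore amounts to assembling Proposition \ref{oddio} and Lemma \ref{discretetocontinuous} and observing the eigenvalue/resonance dictionary between $\Lambda$ and $e^{\Lambda}$.
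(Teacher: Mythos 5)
Your proof is correct and follows essentially the same route as the paper: discretize the evolution family, apply Proposition \ref{oddio} to get the $\mathbb{N}$-Loewner chain, extend via Lemma \ref{discretetocontinuous}, and use the dictionary between additive resonances of $\Lambda$ and multiplicative resonances of $e^{\Lambda}$ for the local-boundedness claim. The paper's version is more terse but identical in substance.
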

\begin{proof}
Let  $(\varphi_{n,m}\colon D\to D)$ be the discretized evolution family of $(\v_{s,t}\colon D\to D)$.  Since no additive real resonance occurs in $\Lambda$, no multiplicative real resonance occurs in $A=e^{\Lambda}.$
The result follows from Proposition \ref{oddio} and Lemma \ref{discretetocontinuous}. 
\end{proof}
\begin{remark}
If no additive real resonance occurs among the eigenvalues of $\Lambda$ then by the proof of Proposition \ref{continuousresult} there exists a family $(h_n)$ of tangent to identity polynomial mappings of uniformly bounded degree and uniformly bounded coefficients  such that $$f_s= \lim_{m\to\infty} e^{-\Lambda m}\circ h_m\circ \varphi_{s,m}.$$
\end{remark}

\section{The Loewner PDE}\label{PDE} 
\begin{definition}
Let $D$ be a domain in $\mathbb{C}^q$ containing $0$ and let $d\in [1,+\infty]$.
A \textit{dilation Herglotz vector field of order $d\geq 1$} on $D$ is a mapping
$$G\colon D\times \mathbb{R}^+\to \mathbb{C}^q$$
satisfying
\begin{itemize}
\item[a)] for all $z\in D$ the map $t\mapsto H(z,t)$ is measurable,
\item[b)] for a.e.  $t\geq 0$ the map $z\mapsto H(z,t)$ is an infinitesimal generator on $D$ of the form $$H(z,t)=\Lambda(z)+O(|z|^2)$$ where the eigenvalues of $\Lambda\in \mathcal{L}(\mathbb{C}^q)$ have strictly negative real part.
\item[c)] for any compact set $K\subset D$ and  there exists a  function $c_{K}\in L^d_{loc}(\R^+,\R^+)$ such that $$|H(z,t)|\leq c_{K}(t),\quad z\in K, t\geq 0.$$
\end{itemize}
The partial differential equation 
\begin{equation}\label{LoewnerPDE}
\frac{\partial f_t(z)}{\partial t}=-df_t(z)H(z,t)\quad a.e.\ t\geq 0,\ z\in D, 
\end{equation}
 where $H(z,t)$ is a dilation  Herglotz vector field, is called the {\sl Loewner PDE}.
\end{definition}

The following is our main result.
\begin{theorem}\label{PDEsolution}
Let $D\subset \C^q$ be a complete hyperbolic domain and let $H(z,t)=\Lambda(z)+O(|z|^2)$ be a dilation Herglotz vector field on $D$ of order $d\geq 1$.
Then the Loewner PDE (\ref{LoewnerPDE}) admits a solution given by a family of univalent mappings $(f_t\colon D\to \mathbb{C}^q)$, such that $\cup_{t\geq 0}f_t(D)=\mathbb{C}^q$, and which is locally absolutely continuous of order $d$ in the following sense:
 for any compact set $K\subset D$  there exists a function  $k_{K}\in L^d_{loc}(\R^+,\mathbb{R}^+)$ such that
 \begin{equation}\label{LCdef}
|f_s(z)- f_t(z)|\leq \int_s^tk_{K}(\xi)d\xi,\quad z\in K,0\leq s\leq t.
 \end{equation}
 If no additive real resonance occurs among the eigenvalues of $\Lambda$  then 
the family $(e^{\Lambda t}\circ f_t)$ is uniformly bounded in a neighborhood of the origin. Any locally absolutely continuous solution  given by a family of holomorphic mappings $(g_t\colon D\to \mathbb{C}^q)$ is of the form $(\Psi\circ f_t)$, where $\Psi\colon \mathbb{C}^q\to \mathbb{C}^{q}$ is holomorphic. 
\end{theorem}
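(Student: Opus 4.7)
The strategy is to reduce the PDE to the functional equation $f_t\circ\v_{s,t}=f_s$, solved by Proposition \ref{continuousresult}, and then upgrade it via the equivalence from \cite{Arosio-Bracci-Hamada-Kohr} recalled around (\ref{functional}).

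\emph{Step 1.} The Herglotz vector field $H$ generates through the Loewner ODE (\ref{ODE}) a $\mathbb{R}^+$-evolution family $(\v_{s,t}\colon D\to D)$; differentiating the ODE at the origin gives $d\v_{s,t}(0)=e^{\Lambda(t-s)}$, so that (\ref{continuous dilation type}) holds and $(\v_{s,t})$ is of dilation type. Proposition \ref{continuousresult} then produces a $\mathbb{R}^+$-Loewner chain $(f_t\colon D\to\C^q)$ associated with $(\v_{s,t})$, hence satisfying $f_t\circ\v_{s,t}=f_s$, with $\bigcup_{t\geq 0}f_t(D)=\C^q$, each $f_t$ univalent, and locally bounded under $e^{\Lambda t}$ provided no additive real resonance occurs.

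\emph{Step 2: local absolute continuity of order $d$.} Fix compact $K\subset D$ and $T>0$. By standard Kobayashi estimates on complete hyperbolic domains applied to $(\v_{s,t})$, the orbit set $K_T\doteq\bigcup_{0\leq s\leq t\leq T}\v_{s,t}(K)$ is relatively compact in $D$. Exploiting the identity $f_t=f_T\circ\v_{t,T}$ (which follows from (\ref{functional})), the family $(f_t)_{t\in[0,T]}$ is uniformly bounded on a compact neighborhood of $\overline{K_T}$, so Cauchy estimates give a constant $M=M(K,T)$ with $|df_t|\leq M$ on $\overline{K_T}$ for every $t\in[0,T]$. Combining the mean value inequality applied to
$$f_s(z)-f_t(z)=f_t(\v_{s,t}(z))-f_t(z)$$
along a path in $\overline{K_T}$ with the ODE bound $|\v_{s,t}(z)-z|\leq\int_s^t c_{\overline{K_T}}(u)\,du$ yields (\ref{LCdef}) with $k_K:=M\cdot c_{\overline{K_T}}\in L^d_{\rm loc}([0,T])$; piecing together over $T=1,2,\ldots$ produces a single $k_K\in L^d_{\rm loc}(\R^+,\R^+)$.

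\emph{Step 3: PDE and uniqueness.} By the equivalence of \cite{Arosio-Bracci-Hamada-Kohr} recalled in the introduction, a locally absolutely continuous family of univalent mappings satisfies (\ref{functional}) if and only if it solves (\ref{LoewnerPDE}); hence $(f_t)$ is the desired solution. For the uniqueness clause, let $(g_t)$ be any locally absolutely continuous holomorphic solution of (\ref{LoewnerPDE}); differentiating $u\mapsto g_u(\v_{s,u}(z))$ and applying the PDE for $g$ together with the ODE for $\v$ gives zero a.e.\ in $u$, whence $g_t\circ\v_{s,t}=g_s$. Thus $(g_t)$ is a subordination chain associated with $(\v_{s,t})$, and Theorem \ref{thuniqueness} provides a holomorphic $\Psi\colon\C^q\to\C^q$ such that $g_t=\Psi\circ f_t$.

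\emph{Main obstacle.} Step 2 is the only point not immediately reducible to an earlier result. The subtlety is that uniform $C^1$ bounds on $(f_t)_{t\in[0,T]}$ cannot be read off from the Poincar\'e-Dulac construction of Proposition \ref{continuousresult}; they must be extracted indirectly through the relation $f_t=f_T\circ\v_{t,T}$ together with Cauchy's formula on a compact neighborhood of the orbit set $\overline{K_T}$, which in turn requires the completeness of the Kobayashi distance on $D$ to guarantee the relative compactness of $K_T$.
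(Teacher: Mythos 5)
Your proof follows exactly the same outline as the paper's: solve the Loewner ODE on the complete hyperbolic domain $D$ to obtain a dilation $\mathbb{R}^+$-evolution family, apply Proposition \ref{continuousresult} to produce the associated Loewner chain filling $\C^q$, transfer the functional equation to the PDE, and invoke Theorem \ref{thuniqueness} for uniqueness. The one genuine difference is that where the paper simply cites \cite[Theorem 4.10]{Arosio-Bracci-Hamada-Kohr} for the local absolute continuity of order $d$ and \cite[Theorem 5.2]{Arosio-Bracci-Hamada-Kohr} for the equivalence between the functional equation and the PDE, you reprove both in the dilation setting: Step 2 extracts uniform $C^1$ bounds from $f_t=f_T\circ\v_{t,T}$ together with the Kobayashi-ball confinement that completeness provides, and Step 3 gives the standard differentiation argument for the converse direction. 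This buys self-containedness at the cost of length; the paper opts for brevity by delegating. A small point worth making explicit in Step 2: the mean-value estimate should be taken along the orbit path $u\mapsto\v_{s,u}(z)$ (which by construction stays in $\overline{K_T}$ and whose speed is bounded by $c_{\overline{K_T}}$), since a straight segment between $z$ and $\v_{s,t}(z)$ need not lie in $D$.
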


\begin{proof}
Since by assumption $D$ is complete hyperbolic, \cite{Arosio-Bracci} yields that the solution of the Loewner ODE
\begin{equation}
\begin{cases}
\frac{\de}{\de t}\v_{s,t}(z)= H(\v_{s,t}(z), t), \quad \hbox{a.e. } t\in[s,\infty),\\
\v_{s,s}(z)=z,\quad s\geq 0.
\end{cases}
\end{equation}
is a $\mathbb{R}^+$-evolution family $(\v_{s,t}(z)=e^{\Lambda (t-s)}(z)+O(|z|^2))$ which is locally absolutely continuous of order $d$ in the following sense:
 for any compact set
  $K\subset D$ there exists a  function $C_{K}\in
  L^d_{loc}(\R^+,\R^+)$ such that
  \begin{equation}\label{ck-evd}
|\v_{s,t}(z)- \v_{s,u}(z)|\leq \int_{u}^t
C_{K}(\xi)d \xi, \quad z\in K,\  0\leq s\leq u\leq t.
  \end{equation}
By Proposition \ref{continuousresult} there exists a  $\mathbb{R}^+$-Loewner chain $(f_t\colon D\to \C^q)$  with $\cup_{t\geq 0}f_t(D)=\C^q$ associated with $(\v_{s,t})$.
By \cite[Theorem 4.10]{Arosio-Bracci-Hamada-Kohr} the chain $(f_t)$ is of locally absolutely continuous of order $d$, and by \cite[Theorem 5.2]{Arosio-Bracci-Hamada-Kohr} it solves the Loewner PDE (\ref{LoewnerPDE}). If  no additive real resonance occurs among the eigenvalues of $\Lambda$ then by Proposition  \ref{continuousresult} the chain $(f_t)$ is locally bounded. Any locally continuous solution of  (\ref{LoewnerPDE}) is  by \cite[Theorem 5.2]{Arosio-Bracci-Hamada-Kohr} a $\mathbb{R}^+$-Loewner chain associated with  $(\v_{s,t})$, thus by Theorem \ref{thuniqueness} it is of the form  $(\Psi\circ f_t)$, where $\Psi\colon \mathbb{C}^q\to \mathbb{C}^{q}$ is holomorphic. 
\end{proof}

\section{An univalence criterion}\label{univalencecriterionsection}
In this section we  generalize a classical univalence criterion in the unit disk due to Pommerenke.
We first need to generalize the notion of locally bounded $\mathbb{R}^+$-Loewner chains in order to include families of non-necessarily univalent mappings.

\begin{definition}\label{normalsub}
Let $D$ be a domain in $\mathbb{C}^q$ containing $0$.
A family $(f_t\colon D\to \mathbb{C}^q)$ is a {\sl locally bounded $\mathbb{R}^+$-subordination chain} if
\begin{itemize}
\item[i)]  for all $0\leq s\leq t$ there exists a holomorphic mapping  $\v_{s,t}\in \Hol(D,D)$   fixing $0$ and satisfying  $f_s=f_t\circ \v_{s,t}$, called {\sl transition mapping},
\item[ii))]  $f_t(z)=e^{-\Lambda t}(z)+O(|z|^2)$ for all $ t\geq 0$,  the eigenvalues of $\Lambda\in\mathcal{L}(\mathbb{C}^q)$ have strictly negative real part, and the family $(e^{\Lambda t}\circ f_t)$ is uniformly bounded in a neighborhood of the origin.
\end{itemize}

Now we can state Pommerenke's criterion.
\begin{theorem}[{\cite[Folgerung 6]{Pommerenke}}]
If $(f_t\colon\mathbb{D}\to\mathbb{C})$ is a locally bounded $\mathbb{R}^+$-subordination chain, then for all $t\geq 0$ the mapping $f_t$ is univalent.
\end{theorem}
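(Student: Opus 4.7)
My plan is to reduce Pommerenke's criterion to the existence and uniqueness theory for the Loewner PDE developed earlier in this paper, specifically Theorems \ref{PDEsolution} and \ref{thuniqueness}, by realizing $(f_t)$ as a solution of a Loewner PDE and comparing it with the canonical univalent solution. A preliminary quantitative argument supplies uniform local univalence: setting $g_t\doteq e^{\Lambda t}\circ f_t$, we have $g_t(z)=z+O(|z|^2)$ and $(g_t)$ uniformly bounded on some disc $B_r\subset\mathbb{D}$, so Cauchy's estimates bound the higher Taylor coefficients of $g_t$ uniformly in $t$, and a standard Rouch\'e / quantitative inverse function argument produces a radius $\rho>0$, independent of $t$, on which each $g_t$, hence each $f_t$, is univalent.

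The technical heart of the argument is to extract from the subordination data a dilation Herglotz vector field $H(z,t)=\Lambda(z)+O(|z|^2)$ and verify that $(f_t)$ solves the Loewner PDE associated with $H$. Since $f_t'(0)=e^{-\Lambda t}\neq 0$, the factorization $f_s=f_t\circ \v_{s,t}$ is uniquely determined, which yields both the semigroup property $\v_{\al,\beta}=\v_{\gamma,\beta}\circ\v_{\al,\gamma}$ and the explicit representation $\v_{s,t}=f_t^{-1}\circ f_s$ on $B_\rho$; combined with the equicontinuity of $(g_t)$ near the origin, this gives local absolute continuity of $(f_t)$ in $t$. Differentiating $f_s=f_t\circ\v_{s,t}$ at $t=s$ on $B_\rho$ produces $H(z,s)\doteq \partial_t\v_{s,t}|_{t=s}$ as a limit of difference quotients, the local boundedness ensuring $H$ is of dilation type. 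By uniqueness of factorization, $\v_{s,t}$ must coincide with the (automatically univalent) solution of the Loewner ODE for $H$, so $(\v_{s,t})$ is an $\mathbb{R}^+$-evolution family in the paper's sense and $(f_t)$ is a subordination chain associated with it.

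Finally, Theorem \ref{PDEsolution} supplies a univalent Loewner chain $(F_t\colon\mathbb{D}\to\mathbb{C})$ solving the same PDE, with $F_t(z)=e^{-\Lambda t}z+O(|z|^2)$, locally bounded modulo $e^{\Lambda t}$, and $\bigcup_{t\geq 0}F_t(\mathbb{D})=\mathbb{C}$; both $(f_t)$ and $(F_t)$ are associated with $(\v_{s,t})$, so Theorem \ref{thuniqueness} yields a holomorphic $\Psi\colon\mathbb{C}\to\mathbb{C}$ with $f_t=\Psi\circ F_t$. It now suffices to show $\Psi$ is injective: on each $F_t(B_\rho)$ one has $\Psi=f_t\circ F_t^{-1}$, a composition of univalent maps; Koebe-type estimates applied to the univalent $F_t$ with $|F_t'(0)|=|e^{-\Lambda t}|\to\infty$ show that the open sets $F_t(B_\rho)$ exhaust $\mathbb{C}$; and consistency on overlaps, combined with $\Psi(0)=0$, $\Psi'(0)=1$ and the identity principle, upgrades the local bijections to a global injection $\Psi\colon\mathbb{C}\to\mathbb{C}$. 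Thus $f_t=\Psi\circ F_t$ is univalent. The main obstacle is the middle step: extracting $H$ and the PDE from the purely algebraic subordination hypothesis demands establishing the right amount of regularity in $t$ and controlling the limits of transition-map difference quotients, both of which rest on the uniform local univalence near the origin together with the equicontinuity given by local boundedness.
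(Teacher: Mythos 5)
Your plan has a genuine gap at the step you yourself flag as "the main obstacle," and the stated resolution does not close it. You propose to derive local absolute continuity of $(f_t)$ in $t$ from "the equicontinuity of $(g_t)$ near the origin." But the uniform bound on $(g_t)=(e^{\Lambda t}\circ f_t)$ near $0$ gives equicontinuity only in the spatial variable $z$, uniformly over the parameter $t$; it says nothing whatsoever about the map $t\mapsto g_t$. Definition \ref{normalsub} asserts only the existence of transition maps $\v_{s,t}$ and the uniform local bound; a locally bounded subordination chain need not even be continuous in $t$, and certainly not locally absolutely continuous. Without absolute continuity the difference quotients $\partial_t\v_{s,t}|_{t=s}$ need not exist, the Herglotz vector field $H$ you want is not produced, and $(f_t)$ is not a solution of any Loewner PDE in the sense of (\ref{LoewnerPDE}), so there is no way to invoke Theorem \ref{PDEsolution} or \ref{thuniqueness}. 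The subsequent comparison $f_t=\Psi\circ F_t$ therefore does not get off the ground. (As a secondary issue, the claimed passage from ``local bijections'' of $\Psi$ to global injectivity via the identity principle is also loosely stated; what one actually needs is that the sets $F_t(B_\rho)$ are nested and exhaust $\C$, so any pair $w_1,w_2$ already lies in a single $F_t(B_\rho)$ where $\Psi=f_t\circ F_t^{-1}$ is injective. That can be fixed, but it is not the main problem.)

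It is worth comparing with how the paper handles the multidimensional generalization, Proposition \ref{univalencecriterion}, because the strategy is deliberately PDE-free. There the chain is assumed continuous in $t$ (an explicit extra hypothesis that does not appear in Pommerenke's disc statement); from continuity and a normal-family argument one shows $\lim_{t\to s^+}\v_{s,t}=\id$ and $\lim_{s\to t^-}\v_{s,t}=\id$, hence univalence of each $\v_{s,t}$; then one sets $h_s=e^{\Lambda s}\circ f_s$, observes $h_t\circ\v_{s,t}=e^{\Lambda(t-s)}\circ h_s$, and concludes by the discrete-time conjugacy result Proposition \ref{interuniv} that each $h_s$, hence each $f_s$, is univalent. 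No Herglotz vector field is ever constructed and no Loewner PDE is ever solved, precisely because the hypotheses do not furnish absolute continuity in $t$. In the one-variable case, Pommerenke's original Folgerung~6 likewise proceeds by distortion and Schwarz--Pick estimates, not by a differential equation. If you want to salvage a PDE-based proof you would first have to establish local absolute continuity of the chain, which requires hypotheses beyond what is given.
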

This criterion has been generalized to the unit ball $\mathbb{B}\subset \C^q$, with different hypotheses,  by Pfaltzgraff \cite[Theorem 2.3]{Pfaltzgraff}, and by Graham and Kohr \cite[Theorem 8.1.6]{Graham-Kohr}. To generalize Pommerenke's criterion we do not assume the subordination chain to solve a Loewner PDE: we  only assume continuity.
A  locally bounded $\mathbb{R}^+$-subordination chain $(f_t\colon D\to\mathbb{C}^q)$ is {\sl continuous} if  the mapping $t\mapsto f_t$ is continuous with respect to the topology of uniform convergence on compacta on $\Hol(D,\mathbb{C}^q)$.
\end{definition}
\begin{proposition}\label{univalencecriterion}
Let $D\subset \C^q$ be a complete hyperbolic domain. If $(f_t\colon D\to \C^q)$ is a continuous locally bounded $\mathbb{R}^+$-subordination chain, then for all $t\geq 0$ the mapping $f_t$ is univalent.
\end{proposition}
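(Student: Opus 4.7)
The plan is to argue by contradiction, locating a critical time $t_*$ at which two orbits of the transition family collide.

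First I would set $g_t=e^{\Lambda t}\circ f_t$, so that $g_t(0)=0$ and $dg_t(0)=I$. Since $(g_t)$ is uniformly bounded on a neighborhood of the origin, Cauchy estimates give uniform control on its higher Taylor coefficients, and Lemma~\ref{koebe} produces a ball $r\mathbb{B}$ on which every $g_t$, and hence every $f_t$, is univalent. Next I would check that the transition maps $\varphi_{s,t}$ satisfy $\varphi_{s,s}=\id$ and $\varphi_{s,t}=\varphi_{u,t}\circ\varphi_{s,u}$: these identities follow near $0$ from $f_s=f_t\circ\varphi_{s,t}$ and the local biholomorphy of $f_t$ there, and they extend to $D$ by analytic continuation. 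The same local representation $\varphi_{s,t}=f_t^{-1}\circ f_s$ near $0$ together with the tautness of $D$ promotes the continuity of $t\mapsto f_t$ to continuity of $(s,t)\mapsto\varphi_{s,t}$ in the compact-open topology.

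Now suppose, for contradiction, that $f_{t_0}(z_1)=f_{t_0}(z_2)$ for distinct $z_1,z_2\in D$. The subordination $f_{t_0}=f_t\circ\varphi_{t_0,t}$ yields $f_t(\varphi_{t_0,t}(z_1))=f_t(\varphi_{t_0,t}(z_2))$ for every $t\geq t_0$. The proof of Lemma~\ref{nuovo} only uses that holomorphic self-maps decrease the Kobayashi distance (not univalence), so it applies verbatim to the discretised sequence $(\varphi_{t_0,n})_{n\in\mathbb{N}}$ and gives $\varphi_{t_0,t}(z_i)\to 0$ as $t\to+\infty$. Hence, for $t$ sufficiently large, both $\varphi_{t_0,t}(z_i)$ lie in $r\mathbb{B}$, so the univalence of $f_t$ there forces $\varphi_{t_0,t}(z_1)=\varphi_{t_0,t}(z_2)$. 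The set $E=\{t\geq t_0:\varphi_{t_0,t}(z_1)=\varphi_{t_0,t}(z_2)\}$ therefore contains a ray $[T,+\infty)$ and is closed by the continuity established above; let $t_*=\inf E$, which is finite and strictly greater than $t_0$ because $\varphi_{t_0,t_0}=\id$ and $z_1\neq z_2$. Put $w:=\varphi_{t_0,t_*}(z_1)=\varphi_{t_0,t_*}(z_2)\in D$.

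The contradiction is obtained at $t_*$. For $t<t_*$ close to $t_*$ the points $\varphi_{t_0,t}(z_1)$ and $\varphi_{t_0,t}(z_2)$ are distinct, and both tend to $w$ by continuity; the semigroup identity $\varphi_{t_0,t_*}=\varphi_{t,t_*}\circ\varphi_{t_0,t}$ then forces $\varphi_{t,t_*}$ to send two distinct points arbitrarily close to $w$ to the common value $w$. On the other hand $\varphi_{t,t_*}\to\varphi_{t_*,t_*}=\id$ uniformly on compacta as $t\nearrow t_*$, and Cauchy estimates upgrade this to $C^1$-convergence on a relatively compact neighborhood of $w$; a standard mean-value argument then yields a fixed neighborhood of $w$ on which $\varphi_{t,t_*}$ is injective for all $t$ sufficiently close to $t_*$, contradicting the collapse. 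The step I expect to be most delicate is the continuity promotion in Step~2 and, correspondingly, the $C^1$-convergence $\varphi_{t,t_*}\to\id$ on a neighborhood of $w$ of uniform size: both rely on tautness of $D$ and the local-to-global principle for subordination chains. The rest is either formal algebra or a direct invocation of the previously established Lemma~\ref{nuovo} and Lemma~\ref{koebe}.
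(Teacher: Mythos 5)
Your proof is correct, and it takes a genuinely different route from the paper's. The paper first runs the critical-time argument to show that the transition maps $\v_{s,t}$ are themselves univalent (using that $\v_{u,r}\to\id$ as $u\to r^-$, with $z,w$ and their orbits confined to a relatively compact Kobayashi ball $\Omega(0,\ell)$), and then packages the remaining work into Proposition~\ref{interuniv}: with $(\v_{s+n,s+m})$ a bona fide dilation $\mathbb{N}$-evolution family locally conjugate to $(e^{\Lambda(m-n)})$ via $h_{s+n}=e^{\Lambda(s+n)}\circ f_{s+n}$, that proposition delivers univalence of each $h_s$, hence of $f_s$. You instead bypass Proposition~\ref{interuniv} entirely and run the critical-time argument directly on a putative pair $z_1\neq z_2$ with $f_{t_0}(z_1)=f_{t_0}(z_2)$: Lemma~\ref{koebe} gives a uniform ball of injectivity for the $f_t$, the contraction Lemma~\ref{nuovo} (whose proof, as you correctly observe, never uses univalence of the $\v$'s) pushes the orbits $\v_{t_0,t}(z_i)$ into that ball and forces the collision set $E$ to contain a ray, and then the first collision time $t_*$ combined with $\v_{t,t_*}\to\id$ yields a contradiction. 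The engine — first collision time plus $C^1$-convergence to the identity on a small Euclidean ball plus a mean-value injectivity estimate — is the same in both proofs; the paper applies it to a pair of points collapsing under the $\v$'s, you apply it to a pair collapsing under $f_{t_0}$. Your version is slightly more self-contained since it rederives in place the small piece of Proposition~\ref{interuniv} it needs, and it also has the virtue of making explicit the $C^1$-upgrade and mean-value step that the paper compresses into ``which is a contradiction since the identity mapping is univalent.'' The one place worth being careful is that joint continuity of $(s,t)\mapsto\v_{s,t}$ is a bit more than you need (the two one-sided limits $\v_{s,t}\to\id$ and the cocycle identity already give continuity of $t\mapsto\v_{t_0,t}$ and the limit $\v_{t,t_*}\to\id$), but the stronger statement does hold by the normal-family argument you indicate, so there is no gap.
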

\begin{proof}
We have to show that $f_t$ is univalent for all $t\geq 0$.
The identity principle yields that for all $0\leq s \leq t$ there exists a unique transition mapping $\v_{s,t}\in \Hol(D,D)$ satisfying  $f_s=f_t\circ \v_{s,t}$ and  fixing $0$, since $f_t$ is locally invertible at $0$. Thus the family $(\v_{s,t})_{0\leq s \leq t}$  satisfies  $\v_{s,s}=\id$ for all $s\geq 0$. Moreover one has $$\v_{u,t}\circ\v_{s,u}=\v_{s,t},\quad 0\leq s\leq u\leq t.$$ 
Indeed for all $0\leq s\leq u\leq t$, $$f_t\circ \v_{u,t}\circ \v_{s,u}=f_u\circ\v_{s,u}= f_s=f_t\circ \v_{s,t},$$ and the assertion follws since the transition mapping is unique. Moreover by the chain rule $df_s(0)=d f_t(0)\circ d\v_{s,t}(0)$, hence $d\v_{s,t}(0)=e^{\Lambda(t-s)}$.

We claim that $\lim_{t\to s+}\v_{s,t}=\id$ for all $s\geq 0$. Indeed since $(\v_{s,t}\colon D\to D)_{0\leq  s\leq t}$ is a normal family and $\v_{s,t}(0)=0$ for all $0\leq s\leq t$, any  sequence $(\v_{s,t_n})$ with $t_n\to s$ admits a subsequence $(\v_{s,t_{n_k}})$ converging on compacta to a mapping $\v\in \Hol(D,D)$, and by $f_s=f_{t_{n_k}} \circ\v_{s,t_{n_k}}$ we obtain $f_s=f_s\circ \v$, thus $\v=\id$. This proves that $\lim_{t\to s+}\v_{s,t}=\id$. In the same way, $\lim_{s\to t-}\v_{s,t}=\id.$ 

This implies that $\v_{s,t}$ is univalent for all $0\leq s\leq t$. Indeed suppose  there exists $0<s<t$ and $z\neq w$ contained in a Kobayashi ball $\Omega(0,\ell)$  such that $\v_{s,t}(z)=\v_{s,t}(w).$ Set $r\doteq \inf \{u\in [s,t]: \v_{s,u}(z)=\v_{s,u}(w)\}.$ 
Since $\lim_{u\to s+}\v_{s,u}=\id$ uniformly on compacta, we have $r>s$. If $u\in (s,r)$, $$\v_{u,r}(\v_{s,u}(z))=\v_{u,r}(\v_{s,u}(w)),$$ and since $\v_{s,u}(z)\neq \v_{s,u}(w),$ the mapping $\v_{u,r}$ is not univalent on $$\bigcup_{u\in(s,r)}\v_{s,u}(z)\cup \v_{s,u}(w)\subset \Omega(0,\ell).$$ Since  $D$ is complete hyperbolic, by \cite[Proposition 1.1.9]{Kobayashi} one has  $\Omega(0,\ell)\subset\subset D$.  But $\lim_{u\to r-} \v_{u,r}=\id$ uniformly on compacta which is a contradiction since the identity mapping is univalent.

Define $h_s\doteq e^{\Lambda s}\circ f_s$, for all $ s\geq 0$. By hypothesis the family $(h_s\colon D\to \C^q)$ is uniformly bounded in a neighborhood of the origin. From $f_t\circ\v_{s,t}=f_s$ we obtain $$h_t\circ \v_{s,t}=e^{\Lambda(t-s)}\circ h_s.$$ Fix $s\geq 0$. The dilation $\mathbb{N}$-evolution family $(\v_{s+n,s+m})_{0\leq n\leq m}$  is locally conjugate to $(e^{\Lambda(m-n+s)})_{0\leq n\leq m}$ by means of the intertwining mappings $(h_{s+n})_{n\geq 0}$. By Proposition \ref{interuniv} the mapping $h_s$ is univalent, thus $f_s=e^{-\Lambda s}\circ h_s$ is univalent.
\end{proof}
As a corollary one easily obtains the following.
\begin{corollary}
Let $D\subset \mathbb{C}^q$ be a complete hyperbolic domain and let $H(z,t)=\Lambda(z)+O(|z|^2)$ be a dilation Herglotz vector field on D.  
Let  $(f_t\colon D\to \C^q)$ be a locally absolutely continuous family of holomorphic mappings which solves the Loewner PDE (\ref{LoewnerPDE}) and assume that the family $(e^{\Lambda t}\circ f_t)$ is uniformly bounded in a neighborhood of the origin. Then for all $t\geq 0$ the mapping $f_t$ is univalent.
\end{corollary}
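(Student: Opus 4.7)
The plan is to exhibit $(f_t)$ as a continuous locally bounded $\mathbb{R}^+$-subordination chain and then apply Proposition \ref{univalencecriterion} directly. Two results from earlier in the paper do most of the work. First, the Arosio--Bracci existence theorem for the Loewner ODE associated with $H$ yields a dilation $\mathbb{R}^+$-evolution family $(\v_{s,t}\colon D\to D)$ of the form $\v_{s,t}(z)=e^{\Lambda(t-s)}(z)+O(|z|^2)$, so in particular each $\v_{s,t}$ fixes the origin. Second, the PDE/functional-equation equivalence of \cite{Arosio-Bracci-Hamada-Kohr}, recalled in the introduction as (\ref{functional}), guarantees that the locally absolutely continuous solution $(f_t)$ of the Loewner PDE satisfies the subordination identity $f_s=f_t\circ\v_{s,t}$ for all $0\leq s\leq t$.

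From here, condition (i) of the subordination chain definition is immediate, since each $\v_{s,t}$ is a self-map of $D$ fixing $0$. For condition (ii), I would differentiate the functional equation at the origin to obtain $f_t(0)=f_0(0)$ and $df_t(0)=df_0(0)\,e^{-\Lambda t}$. An affine change of coordinates on the target, $f_t\mapsto df_0(0)^{-1}(f_t-f_0(0))$, then puts $f_t$ in the normalized form $f_t(z)=e^{-\Lambda t}(z)+O(|z|^2)$ while preserving the Loewner PDE, the subordination relation, the local boundedness of $(e^{\Lambda t}\circ f_t)$ near the origin, and univalence. The third hypothesis of Proposition \ref{univalencecriterion}, continuity of $t\mapsto f_t$ in the topology of uniform convergence on compacta of $\Hol(D,\mathbb{C}^q)$, is automatic from local absolute continuity of the chain. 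Proposition \ref{univalencecriterion} then yields univalence of every $f_t$, and pulling the conclusion back through the affine coordinate change preserves univalence.

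The main delicate point is verifying that $df_0(0)$ is invertible so that the affine normalization above is legitimate. If it were not, then $df_t(0)=df_0(0)\,e^{-\Lambda t}$ would be singular for every $t$ and no $f_t$ could be univalent, contradicting the very conclusion of the corollary; so in the non-degenerate case of interest $df_0(0)$ is automatically invertible, and indeed this invertibility is encoded in the uniform boundedness hypothesis on $(e^{\Lambda t}\circ f_t)$ near $0$ together with the relation $df_t(0)=df_0(0)\,e^{-\Lambda t}$ and the Jordan structure of $\Lambda$. Apart from this small technicality, the argument reduces to a straightforward bookkeeping translation from the PDE setting to the subordination-chain setting in which Proposition \ref{univalencecriterion} is phrased.
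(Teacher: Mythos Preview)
Your approach is correct and is precisely what the paper intends: the paper offers no proof beyond ``As a corollary one easily obtains the following,'' and the natural route is exactly to pass from the PDE to the functional equation $f_s=f_t\circ\varphi_{s,t}$, verify that $(f_t)$ is a continuous locally bounded $\mathbb{R}^+$-subordination chain, and invoke Proposition~\ref{univalencecriterion}.

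Two small caveats. First, the PDE/functional-equation equivalence recalled in the introduction is stated for \emph{univalent} families, which you cannot assume here; you should note explicitly that the direction you need (PDE $\Rightarrow$ functional equation) follows from the elementary computation $\frac{\partial}{\partial t}(f_t\circ\varphi_{s,t})=0$ and does not require univalence. Second, your argument for the invertibility of $df_0(0)$ is circular as written (``contradicting the very conclusion of the corollary''), and the uniform-boundedness hypothesis alone does not force it either; in fact the corollary as stated is false without some normalization, so you should simply read the hypothesis $f_t(z)=e^{-\Lambda t}(z)+O(|z|^2)$ as implicit, which is standard in this context and consistent with Definition~\ref{normalsub}.
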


\appendix

\section{Ausiliary Lemmas}
For the convenience of the reader, we recall here some auxiliary Lemmas, in the form used in the proofs.
\begin{lemma}{\cite[Lemma 2.2]{Arosio}}\label{taylor}
Let $A\in \mathcal{L}(\mathbb{C}^q)$.
Let $\mathcal{F}$ be a family of holomorphic mappings $(f\colon r\mathbb{B}\rightarrow \mathbb{C}^q)$, bounded by $M>0$, and let $k\geq 2 $ such that  $f(z)-A(z)=O(|z|^k)$ for all $f\in \mathcal{F}$.
Then there exists $C_k>0$ such that   $|f(z)-A(z)|\leq C_k|z|^k$ for all $z\in r\mathbb{B}$. 
\end{lemma}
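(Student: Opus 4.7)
My plan is to reduce the statement to the one-variable Schwarz lemma applied to the remainder $g := f - A$, uniformly over the family $\mathcal{F}$.

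\textbf{Step 1 (Uniform bound on the remainder).} Since $A \in \mathcal{L}(\mathbb{C}^q)$ is linear, $|A(z)| \leq \|A\|\, r$ for every $z \in r\mathbb{B}$. Combined with $|f(z)| \leq M$, this gives
\[
|g(z)| = |f(z) - A(z)| \leq M + r\|A\| =: N, \qquad z \in r\mathbb{B},\ f \in \mathcal{F},
\]
i.e.\ the family $\{g = f - A : f \in \mathcal{F}\}$ is uniformly bounded by $N$. By hypothesis each such $g$ is holomorphic on $r\mathbb{B}$ and vanishes to order $k$ at the origin, so its power series expansion reads
\[
g(w) = \sum_{|I|\geq k} a_I\, w^I.
\]

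\textbf{Step 2 (Reduction to scalar components along lines).} Let $z \in r\mathbb{B} \setminus \{0\}$ and fix a component $g_j = \pi_j \circ g$. Consider the holomorphic function of one complex variable
\[
\psi_{j,z}(\zeta) := g_j\!\left( \zeta \cdot \tfrac{z}{|z|}\right), \qquad |\zeta| < r.
\]
Since $|\zeta z/|z|| = |\zeta| < r$, this is well defined and bounded by $N$. Substituting the series expansion shows
\[
\psi_{j,z}(\zeta) = \sum_{m \geq k} \zeta^m \, P_m^{(j)}\!\left(\tfrac{z}{|z|}\right),
\]
so $\psi_{j,z}$ vanishes to order at least $k$ at $\zeta = 0$.

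\textbf{Step 3 (One-variable Schwarz lemma).} The function $\zeta \mapsto \zeta^{-k} \psi_{j,z}(\zeta)$ extends holomorphically across the origin and is bounded by $N/(r-\varepsilon)^k$ on the circle $|\zeta| = r - \varepsilon$ by the maximum principle; letting $\varepsilon \to 0$ yields
\[
|\psi_{j,z}(\zeta)| \leq \frac{N}{r^k}\, |\zeta|^k, \qquad |\zeta| < r.
\]
Evaluating at $\zeta = |z|$ gives $|g_j(z)| \leq (N/r^k)\, |z|^k$, and this estimate holds uniformly in $f \in \mathcal{F}$ and in the direction $z/|z|$.

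\textbf{Step 4 (Assemble).} Summing the componentwise bound gives
\[
|g(z)|^2 = \sum_{j=1}^q |g_j(z)|^2 \leq q\, \frac{N^2}{r^{2k}}\, |z|^{2k},
\]
so the lemma holds with $C_k := \sqrt{q}\,(M + r\|A\|)/r^k$, which depends only on $k$, $r$, $M$, $\|A\|$ and $q$, not on the particular $f \in \mathcal{F}$. The argument is essentially a parametric Schwarz lemma, and I expect no real obstacle; the only thing to be mildly careful about is that the Taylor series of $g$ really does feed into a scalar function vanishing to the correct order along every complex line, which Step 2 verifies directly from the multi-index expansion.
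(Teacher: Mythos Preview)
Your argument is correct: the reduction to the one-variable Schwarz lemma along complex lines through the origin, applied componentwise to $g=f-A$, gives the uniform bound $|f(z)-A(z)|\le \sqrt{q}\,(M+r\|A\|)\,r^{-k}|z|^k$ exactly as you write. The paper itself does not prove this lemma; it merely recalls it in the appendix as an auxiliary result from \cite{Arosio}, so there is no in-paper proof to compare against, but the Schwarz-on-slices argument you give is the standard one and is almost certainly what the cited reference contains.
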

\begin{lemma}{\cite[Lemma 2.3]{Arosio}}\label{estimate}
Let $A\in \mathcal{L}(\mathbb{C}^q)$, and let $D$ be a domain containing $0$.
Let $\mathcal{F}$ be a family of holomorphic mappings $(f\colon D\rightarrow \mathbb{C}^q)$, bounded by $M>0$, and satisfying $f(z)=A(z)+O(|z|^2)$. Let $\al>0$ be such that  $ \max_{z\in \mathbb{C}^q}\frac{|A(z)|}{|z|}<\alpha$.
Then there exists  $s>0$ such that if $f\in \mathcal{F}$ then  $|f(z)|\leq \alpha |z|$ for all $ |z|\leq s.$
\end{lemma}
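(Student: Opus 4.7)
The plan is a direct reduction to Lemma \ref{taylor}, combined with the linear bound $|A(z)|\leq\beta|z|$ for some $\beta<\alpha$.

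First, since $D$ is open and contains $0$, fix $r>0$ small enough that $r\mathbb{B}\subset D$. The restrictions to $r\mathbb{B}$ of the mappings in $\mathcal{F}$ form a family of holomorphic maps $r\mathbb{B}\to\mathbb{C}^q$ uniformly bounded by $M$, each satisfying $f(z)-A(z)=O(|z|^2)$. Lemma \ref{taylor} applied with $k=2$ then supplies a constant $C_2>0$, depending on $\mathcal{F}$ but not on the individual $f$, such that
\[ |f(z)-A(z)|\leq C_2|z|^2,\qquad z\in r\mathbb{B},\ f\in\mathcal{F}. \]

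Next, set $\beta:=\max_{|z|=1}|A(z)|$. By linearity and homogeneity, $|A(z)|\leq \beta|z|$ for every $z\in\mathbb{C}^q$, and by hypothesis $\beta<\alpha$. Combining with the preceding estimate gives
\[ |f(z)|\leq |A(z)|+|f(z)-A(z)|\leq \bigl(\beta+C_2|z|\bigr)|z|,\qquad z\in r\mathbb{B},\ f\in\mathcal{F}. \]
Choosing $s:=\min\{r,(\alpha-\beta)/C_2\}>0$, whenever $|z|\leq s$ the factor $\beta+C_2|z|$ is at most $\alpha$, so $|f(z)|\leq\alpha|z|$ uniformly in $f\in\mathcal{F}$, which is the desired conclusion.

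There is no real obstacle in this argument: all the analytic work is already packaged in Lemma \ref{taylor}, whose proof is the standard Cauchy-estimate for the Taylor remainder on a fixed smaller ball, uniformly across a bounded family. The only mild care is in shrinking to a ball $r\mathbb{B}\subset D$ so that Cauchy estimates apply; once that is done, the remainder is elementary algebra in the choice of $s$.
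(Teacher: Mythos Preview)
Your proof is correct. The paper itself does not supply a proof of this lemma: it is recalled in the appendix as an auxiliary result quoted from \cite[Lemma 2.3]{Arosio}, so there is no in-paper argument to compare against. Your reduction to Lemma~\ref{taylor} with $k=2$, followed by the triangle inequality $|f(z)|\leq |A(z)|+|f(z)-A(z)|\leq(\beta+C_2|z|)|z|$ and the choice $s=\min\{r,(\alpha-\beta)/C_2\}$, is the natural and standard route.
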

\begin{lemma}{\cite[Lemma 2.5]{Arosio}}\label{koebe}
Let $A\in \mathcal{A}(\mathbb{C}^q)$, and let $D$ be a domain containing $0$.
Let $\mathcal{F}$ be a family of holomorphic mappings $(f\colon D\rightarrow \mathbb{C}^q)$, bounded by $M>0$, and satisfying $f(z)=A(z)+O(|z|^2)$.
There exist $r>0$ and $s>0$ such that if $f\in \mathcal{F}$ then $f$ is univalent on $r\mathbb{B}$, and $s\mathbb{B}\subset f(r\mathbb{B})$.
\end{lemma}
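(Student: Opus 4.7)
The plan is to prove Lemma \ref{koebe} by a quantitative inverse function theorem argument, obtaining uniformity from Lemma \ref{taylor} together with the Cauchy estimates. Both conclusions (univalence on $r\mathbb{B}$ and covering of $s\mathbb{B}$) are consequences of the invertibility of $A$ plus a uniform control of the remainder $f-A$ at second order.

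First I would apply Lemma \ref{taylor} with $k=2$ to the family $\mathcal{F}$: this yields $r_0>0$ and $C_2>0$, depending only on $M$ and $A$, such that $|f(z)-A(z)|\leq C_2|z|^2$ for every $f\in\mathcal{F}$ and every $z\in r_0\overline{\mathbb{B}}\subset D$. Applying the Cauchy estimates to the holomorphic maps $f-A$ on balls of radius $r_0/2$ gives a uniform bound $|D(f-A)(z)|\leq C_1|z|$ on $(r_0/2)\mathbb{B}$, with $C_1$ depending only on $C_2$ and $r_0$.

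For the univalence statement, for $z,w\in r\mathbb{B}$ with $r\leq r_0/2$ the fundamental theorem of calculus applied to $f-A$ yields
$$
f(z)-f(w)=A(z-w)+\int_0^1 D(f-A)\bigl(w+t(z-w)\bigr)(z-w)\,dt=\bigl(A+E_{f,z,w}\bigr)(z-w),
$$
where $\|E_{f,z,w}\|\leq C_1 r$ uniformly in $f\in\mathcal{F}$. Since $A\in\mathcal{A}(\mathbb{C}^q)$ is invertible, choosing $r$ small enough that $C_1 r\,\|A^{-1}\|<1$ makes $A+E_{f,z,w}$ invertible, so $f(z)=f(w)$ forces $z=w$. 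Thus each $f\in\mathcal{F}$ is univalent on this uniform ball $r\mathbb{B}$.

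For the covering statement I would invert $f$ by a uniform Banach fixed-point argument. Writing $g\doteq A^{-1}\circ f=\id+\psi$ with $\psi\doteq A^{-1}\circ(f-A)$, the previous estimates give $|\psi(z)|\leq C|z|^2$ and $|D\psi(z)|\leq C|z|$ on a common ball, with $C$ depending only on $\|A^{-1}\|$, $C_1$ and $C_2$. For given $y$ the equation $g(z)=y$ is equivalent to the fixed-point equation $z=T_y(z)\doteq y-\psi(z)$. I choose $\rho\in(0,r]$ small enough that $T_y$ is a $\tfrac12$-contraction on $\rho\overline{\mathbb{B}}$, and then $s_0>0$ small enough that $|y|\leq s_0$ ensures $T_y$ maps $\rho\overline{\mathbb{B}}$ into itself. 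The Banach fixed-point theorem then yields a unique $z\in\rho\mathbb{B}$ with $g(z)=y$, so $s_0\mathbb{B}\subset g(\rho\mathbb{B})=A^{-1}(f(\rho\mathbb{B}))$; picking $s>0$ with $s\mathbb{B}\subset A(s_0\mathbb{B})$ (possible since $A$ is a linear automorphism) gives $s\mathbb{B}\subset f(\rho\mathbb{B})\subset f(r\mathbb{B})$.

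The only point that requires real attention is uniformity: every constant entering the final radii $r$ and $s$ must depend solely on $M$, $A$ and $D$, not on the individual $f\in\mathcal{F}$. That uniformity is automatic once Lemma \ref{taylor} and the Cauchy estimates are invoked, so no serious obstacle remains; the argument is a uniform, quantitative rerun of the classical inverse function / Koebe-type reasoning, with Lemma \ref{taylor} playing the role of the normalization needed to make the estimates simultaneous over the family.
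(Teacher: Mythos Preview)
Your argument is correct. The paper itself does not prove this lemma: it is listed in the appendix as an auxiliary result quoted verbatim from \cite[Lemma~2.5]{Arosio}, with no proof given. So there is no ``paper's own proof'' to compare against; what you have written is a valid self-contained proof that the present paper simply does not supply.

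A small remark on presentation: the step ``Cauchy estimates give $|D(f-A)(z)|\leq C_1|z|$'' deserves one more line of justification, since Cauchy alone only gives a uniform bound on $D(f-A)$. The $|z|$ factor comes either from the Schwarz lemma (each entry of $D(f-A)$ vanishes at $0$) or, equivalently, from applying Cauchy on the ball $B(z,|z|)\subset r_0\mathbb{B}$ for $|z|\le r_0/2$ and using $|f(w)-A(w)|\le C_2(2|z|)^2$ there. Either way the constant depends only on $M$, $A$, $r_0$ and $q$, so uniformity is preserved and the rest of your fixed-point argument goes through as written.
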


The following Lemma  is stated in \cite[Lemma 11]{Peters} as a simple generalization of   \cite[Lemma 1, Appendix]{Rudin-Rosay}. A proof can be found in \cite[Corollary 4.4, Lemma 4.5]{Arosio}.
\begin{lemma}{\cite[Corollary 4.4]{Arosio}}\label{peters}
Let $\Delta\subset \C^q$ be the unit polydisc.

Let $(T_{n,m})$ be a triangular dilation $\mathbb{N}$-evolution family of uniformly bounded degree and uniformly bounded coefficients. Then
\begin{itemize}
\item[a)]
 there exists $\beta\geq 0$ such that for all $k\geq 0$, $$|T_{0,k}^{-1}(z)-T_{0,k}^{-1}(z')|\leq \beta^k|z-z'|,\quad z,z'\in \frac{1}{2}\Delta.$$

\item[b)] $T_{0,n}(z)\rightarrow 0$ uniformly on compacta and  for each neighborhood $V$ of $0$ we have $\bigcup_{n=1}^\infty T_{0,n}^{-1}(V)=\mathbb{C}^q.$
\end{itemize}
\end{lemma}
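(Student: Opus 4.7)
The plan is to exploit the triangular structure of the $T_{n,m}$ and to proceed by induction on the coordinate index $j\in\{1,\ldots,q\}$. By definition of a dilation $\mathbb{N}$-evolution family, every $T_{n,n+1}$ shares the \emph{same} triangular linear part $A$, whose diagonal entries $(\lambda_1,\ldots,\lambda_q)$ satisfy $|\lambda_j|<1$; the extra hypotheses provide a uniform bound $d$ on the degrees and a uniform bound $M$ on the coefficients of the polynomial perturbations $t^{(j)}_n(z_1,\ldots,z_{j-1})$. A preliminary observation, which I would establish first, is that the recurrence
\[
T_{0,k+1}^{(j)}(z)=\lambda_j T_{0,k}^{(j)}(z)+t^{(j)}_k\bigl(T_{0,k}^{(1)}(z),\ldots,T_{0,k}^{(j-1)}(z)\bigr)
\]
combined with the fact that $\lambda_j$ is independent of $n$ implies by induction on $j$ that the degree of $T_{0,k}^{(j)}$ in $(z_1,\ldots,z_j)$ is bounded by $d^{j-1}$ \emph{uniformly in $k$}, even though the composition $T_{0,k}$ consists of $k$ polynomial maps.

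For part (a), I will show by induction on $j$ that $(T_{0,k}^{-1})^{(j)}$ is Lipschitz on $\tfrac12\Delta$ with constant $C_j\beta_j^{k}$, where $C_j,\beta_j\geq 1$ depend only on $M$, $d$ and on $\lambda_1,\ldots,\lambda_j$. The base case is immediate: $(T_{0,k}^{-1})^{(1)}(w)=\lambda_1^{-k}w_1$ has Lipschitz constant $|\lambda_1|^{-k}$. For $j>1$, iterating the recurrence above gives $T_{0,k}^{(j)}(z)=\lambda_j^{k}z_j+S_k(z_1,\ldots,z_{j-1})$, hence
\[
(T_{0,k}^{-1})^{(j)}(w)=\lambda_j^{-k}\Bigl(w_j-S_k\bigl((T_{0,k}^{-1})^{(1)}(w),\ldots,(T_{0,k}^{-1})^{(j-1)}(w)\bigr)\Bigr).
\]
By the preliminary observation $S_k$ has uniformly bounded degree, so its sup norm on $r\Delta$ is controlled by $r^{d^{j-1}}$ times a multiple of its coefficients; a further induction (or an explicit geometric-series computation of the form $a^{(k)}_I=\sum_{l=0}^{k-1}\lambda_j^{l}\cdot(\text{bounded})$) bounds the coefficients of $S_k$ uniformly in $k$. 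Coupling this with the inductive Lipschitz estimates $C_i\beta_i^{k}$ for $(T_{0,k}^{-1})^{(i)}$, $i<j$, yields a Lipschitz bound of the form $C_j\beta_j^{k}$ for $(T_{0,k}^{-1})^{(j)}$ on $\tfrac12\Delta$. Finally, choosing $\beta\geq\max_j(C_j\beta_j)$ absorbs all multiplicative constants into the geometric rate, giving $\mathrm{Lip}(T_{0,k}^{-1}|_{\frac12\Delta})\leq\beta^k$.

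For part (b), the convergence $T_{0,n}\to 0$ on compacta follows by the same forward triangular induction. The first coordinate is $T_{0,n}^{(1)}(z)=\lambda_1^n z_1\to 0$ on compacta; for $j>1$, the inductive hypothesis $T_{0,n}^{(i)}\to 0$ on compacta for $i<j$, together with the equi-Lipschitz continuity of the family $(t^{(j)}_n)$ at the origin (from uniformly bounded coefficients and degree), forces the perturbation $t^{(j)}_n(T_{0,n}^{(1)},\ldots,T_{0,n}^{(j-1)})$ to tend to $0$ on compacta, and the recurrence $|T_{0,n+1}^{(j)}|\leq|\lambda_j|\cdot|T_{0,n}^{(j)}|+\varepsilon_n$ with $\varepsilon_n\to 0$ and $|\lambda_j|<1$ yields $T_{0,n}^{(j)}\to 0$ by a discrete Gr\"onwall argument. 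The second assertion is then immediate: for any $w\in\mathbb{C}^q$ and neighborhood $V$ of $0$, the convergence $T_{0,n}(w)\to 0$ gives $T_{0,n}(w)\in V$ for $n$ large, i.e.\ $w\in T_{0,n}^{-1}(V)$.

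The principal obstacle is the bookkeeping in part (a). It is here that one really needs the shared linear part: in the formula for $(T_{0,k}^{-1})^{(j)}$ the $j-1$ factors $\lambda_i^{-k}$ entering via $(T_{0,k}^{-1})^{(i)}(w)$ combine with the degree-$d^{j-1}$ growth of $S_k$ in a way that does not compound across coordinates, so that a \emph{single} geometric rate $\beta^k$ suffices. Without the hypothesis that every $T_{n,n+1}$ has the same linear part the degree of $T_{0,k}^{(j)}$ would itself grow with $k$ and the argument would collapse to a tower of exponentials.
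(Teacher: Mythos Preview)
The paper does not supply its own proof of this lemma: it is listed among the auxiliary results in the appendix, with the proof deferred to \cite[Corollary~4.4, Lemma~4.5]{Arosio} and ultimately to \cite[Appendix, Lemma~1]{Rudin-Rosay} via \cite[Lemma~11]{Peters}. Your coordinate-by-coordinate induction exploiting the triangular structure is exactly the classical argument used in those references, and the plan is correct. The essential points are the ones you isolate: (i) the triangular shape forces $\deg T_{0,k}^{(j)}\le d^{\,j-1}$ uniformly in $k$; (ii) the coefficients of $S_k$ stay bounded because, inductively, the coefficients of $T_{0,l}^{(i)}$ for $i<j$ are bounded uniformly in $l$, and summing against $|\lambda_j|^{k-1-l}$ gives a convergent geometric series; (iii) in the inversion formula for $(T_{0,k}^{-1})^{(j)}$ one must control $S_k$ on the exponentially large image of $(T_{0,k}^{-1})^{(<j)}(\tfrac12\Delta)$, but since $S_k$ has bounded degree and bounded coefficients its Lipschitz constant on a polydisc of radius $R$ grows like $R^{d^{\,j-1}-1}$, and combining this with the inductive Lipschitz bounds produces an estimate $C_j\beta_j^k$. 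As $j$ runs only up to the fixed dimension $q$, the resulting constants can be absorbed into a single $\beta$. Your treatment of part~(b) via the discrete Gr\"onwall recurrence $|T_{0,n+1}^{(j)}|\le |\lambda_j|\,|T_{0,n}^{(j)}|+\varepsilon_n$ is also the standard one.

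One small correction to your closing commentary: the uniform degree bound $d^{\,j-1}$ for $T_{0,k}^{(j)}$ is a consequence of the triangular structure alone and does \emph{not} require the linear parts to coincide; even with $n$-dependent diagonals the same induction gives the same bound. What the shared linear part genuinely provides is the clean factor $\lambda_j^{-k}$ in the inversion formula (hence a single explicit geometric rate) and, more importantly, the coefficient bound on $S_k$ via the geometric series in $|\lambda_j|$. With varying but uniformly pinched diagonals $0<c\le|\lambda_{j,n}|\le C<1$ one still obtains an exponential Lipschitz estimate, so the ``tower of exponentials'' collapse you describe would not occur.
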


\end{document}